\newtheorem{thm}{Theorem}[section]
\newtheorem{theorem}[thm]{Theorem}
\newtheorem{lemma}[thm]{Lemma}
\newtheorem{conjecture}[thm]{Conjecture}
\newtheorem{defn}[thm]{Definition}
\theoremstyle{remark}
\newtheorem{remark}[thm]{Remark}
\numberwithin{equation}{section}
\newtheorem{example}[thm]{Example}
\newcommand{\cF}{\mathcal F}
\newcommand{\cU}{\mathcal U}
\newcommand{\bbR}{\mathbb R}
\newcommand{\bbT}{\mathbb T}
\newcommand{\bbC}{\mathbb C}
\newcommand{\bbZ}{\mathbb Z}
\newcommand{\rank}{{\rm rank\ }}
\begin{document}

\title{Nondegenerate singularities of integrable dynamical systems}

\author{Nguyen Tien Zung}
\address{Institut de Mathématiques de Toulouse, UMR5219, Université Toulouse 3}
\email{tienzung.nguyen@math.univ-toulouse.fr}

\begin{abstract}{%
We give a natural notion of nondegeneracy for singular points of integrable non-Hamiltonian systems, 
and show that such nondegenerate singularities are locally geometrically linearizable and deformation rigid
in the analytic case. We conjecture that the same result also holds in the smooth case,  and prove this conjecture 
for systems of type $(n,0)$, i.e. $n$ commuting smooth vector fields on a $n$-manifold.
}\end{abstract}

\date{This version: June 2013, accepted for publication in Ergodic Theory and Dynamical Systems}
\subjclass{37G05, 58K50,37J35}
\keywords{integrable system, normal form, linearization, nondegenerate singularity, $\bbR^n$-action}%

\maketitle

\section{Introduction}

There are many natural dynamical systems which are non-Hamiltonian, maybe because they have non-holonomic constraints or 
because they don't conserve the energy, etc., but which are still integrable in a natural sense, see, e.g.
\cite{BaCu-Nonholonomic1999,  Bogoyavlenskij-Integrability1998,  FedorovKozlov-Suslin, Stolovitch1} for some examples.
It is an interesting question to study the topology, and in particular the singularities, of such integrable non-Hamiltonian systems.
Unlike the Hamiltonian case, which has been very extensively studied, the non-Hamiltonian case is still largely open.
To our knowledge, even the notion of nondegeneracy of singularities for integrable non-Hamiltonian systems has not appeared in the
literature before. 

The aim of this paper is to establish this notion of nondegeneracy, and to study it. In particular, we want to
extend geometric local linearization theorems of Vey  \cite{Vey}  and Eliasson \cite{Eliasson-Normal1990}
to the non-Hamiltonian case. We will show that, similarly
to the Hamiltonian case, nondegenerate singularities of analytic integrable dynamical systems 
are rigid with respect to deformations, and are  geometrically linearizable
(see Theorem \ref{thm:linearization-nondegenerate}, Theorem \ref{thm:linearization-nondegenerate2} and Theorem \ref{thm:rigid}). We conjecture
that the same theorem is also true for smooth non-Hamiltonian integrable systems, and prove this conjecture
for the class of systems of type $(n,0)$, i.e. $n$ commuting smooth vector fields on a $n$-manifold 
(Theorem \ref{thm:NormalForm}). This last theorem is the starting point of a very recent work by 
Nguyen Van Minh and the author \cite{ZungMinh_Rn2012} 
on the geometry of nondegenerate $\bbR^n$-actions on $n$-manifolds.

In this paper, we will work in both the analytic (real or complex) and the smooth categories. The analytic part of this paper
relies heavily on our theorem  \cite{Zung-Birkhoff2005,Zung-Poincare2002} on the existence of 
convergent Poincaré--Dulac--Birkhoff  normalization for analytic integrable dynamical systems.

\section{Geometric equivalence of integrable systems}

Let us recall that, a dynamical system given by a vector field $X$ on a $m$-dimensional 
manifold $M$ is called {\bf integrable} (in the non-Hamiltonian sense)
if there exist integers $p \geq 1, q \geq 0$, $p+q = m$,  
$p$ vector fields $X_1 = X, X_2, \hdots, X_p,$ and $q$ functions $F_1,\hdots,F_q$ on $M,$ 
such that the vector fields $X_1,\hdots,X_p$ commute with each other, and the functions $F_1,\hdots, F_q$ 
are common first integrals for these vector fields:
\begin{equation}
[X_i, X_j] = 0  \ \forall \ i,j=1,\hdots p 
\end{equation}
and
\begin{equation}
  X_i(F_j) = 0 \ \forall \ i=1,\hdots,p, \ j = 1,\hdots, q.
\end{equation}
Moreover, one requires that 
\begin{equation}
dF_1 \wedge \hdots \wedge dF_q \neq 0 \ \text{and} \ \  X_1 \wedge \hdots \wedge X_p \neq 0
\end{equation}
almost everywhere.  We will also
say that the $m$-tuple $(X_1,\hdots,X_p,F_1,\hdots,F_q)$ is an {\bf integrable system of type $(p,q)$}. 
 This notion of non-Hamiltonian integrability is a very natural extension of the notion of integrability à la Liouville from the
Hamiltonian case to the non-Hamiltonian case, and it retains the main dynamical features of Hamiltonian integrability, see, e.g. 
\cite{AyoulZung-Galois2010, BaCu-Nonholonomic1999,  Bogoyavlenskij-Integrability1998,  FedorovKozlov-Suslin, Stolovitch1, 
Zung-Poincare2002,Zung-Torus2006}. A Hamiltonian system with $n$ degrees of freedom which is integrable à la Liouville is also
integrable in the above sense with $p=q = n$ and $m=2n.$

Geometrically, an integrable system $(X_1,\hdots,X_p,F_1,\hdots,F_p)$ 
of type $(p,q)$ may be viewed as a singular $p$-dimensional foliation (given by the
infinitesimal $\mathbb{K}^p$-action generated by $X_1,\hdots, X_p$, where $\mathbb{K} = \mathbb{R}$ or $\mathbb{C}$,
and moreover each leaf of this foliation admits an natural
induced affine structure from the action), and this foliation is integrable in the sense that 
it admits a complete set of first integrals, i.e. the functional dimension of the algebra of first integrals of the foliation is equal
to the codimension of the foliation. 

Denote by $\mathcal{F}$ the algebra of common first integrals of $X_1,\hdots,X_p.$ 
Instead of taking $F_1,\hdots, F_q$, we can choose from $\mathcal{F}$ any other family of $q$ functionally independent functions, and they will
still form with $X_1,\hdots, X_p$ an integrable system. Moreover, in general, there is no natural preferred choice of $q$ functions in
$\mathcal{F}$.  So, instead of specifying $q$ first integrals, sometimes it is better to look at the whole algebra 
$\mathcal{F}$ of first integrals.

Notice also that, if $f_{ij} \in \mathcal{F}$ ($i,j=1,\hdots,p$)  such that the matrix $(f_{ij})$ is invertible, then  by putting
\begin{equation}
 \hat{X_i} = \sum_{ij} f_{ij} X_j \ \ \text{for all} \ \ i=1,\hdots,p,
\end{equation}
we get another integrable system $(\hat{X_1},\hdots,\hat{X_p},F_1,\hdots,F_q)$, which, from the geometric point of view, is essentially
the same as the original system, because it gives rise to the same integrable singular foliation, and the same affine structure  on 
the leaves of the foliation.

\begin{defn} \label{defn:GeometricEquivalence}
Two integrable dynamical  systems  $(X_1,\hdots,X_p,F_1,\hdots,F_q)$ and $(X'_1,\hdots,X'_p,F'_1,\hdots,F'_q)$
of type $(p,q)$ on a manifold $M$ are said to be {\bf geometrically equal}, if they have the same algebra of first 
integrals (i.e. $F'_1,\hdots,F'_p$ are functionally dependent of $F_1,\hdots,F_p$ and vice versa), and there exists
a  matrix $(f_{ij})_{i=1,\hdots,p}^{j=1,\hdots,p}$, whose entries $f_{ij}$ are first integrals of the system, and whose
determinant is non-zero everywhere, such that one can write 
\begin{equation}
X'_i = \sum_{j}f_{ij} X_j \ \ \forall \ i=1,\hdots,p. 
\end{equation}
Two integrable systems are said to be {\bf geometrically equivalent} if they become geometrically the same 
after a diffeomorphism. 
\end{defn}

In this paper, we will be mainly interested in the local structure of integrable dynamical systems, up to geometric equivalence, in the sense
of the above definition. It's clear that, near a regular point, i.e. a point $z$ such that $X_1 \wedge \hdots \wedge X_p (z) \neq 0$, any two
integrable systems of the same type $(p,q)$ will be locally geometrically equivalent, and is equivalent to the rectified
system $X_1 = \frac{\partial}{\partial x_1}, \hdots, X_p = \frac{\partial}{\partial x_p}$. The question about the local structure becomes
interesting only at singular points.  Remark also that, in the definition of geometric equivalence, we don't really care about the choice
of first integrals $F_1,\hdots,F_q$ and can change them by other functionally independent first integrals at will.

If $X_1 \wedge \hdots \wedge X_p (z) = 0$ but $X_{k+1} \wedge \hdots \wedge X_p (z) \neq 0$ for example, then we can simultaneously
rectify  $X_{k+1}, \hdots, X_p$, i.e. find a coordinate system in which
\begin{equation}
X_{k+1} = \frac{\partial}{\partial x_1}, \hdots, X_{p} = \frac{\partial}{\partial x_{p-k}}.  
\end{equation}
Then the system does not depend on the coordinates $x_1,\hdots, x_{p-k}$,  and we can reduce
the problem to that of a system of type $(k,q)$ by forgetting about $x_1, \hdots, x_{p-k}$ and $X_{k+1},\hdots, X_p$. After such a reduction,
we may assume that $z$ is a fixed point of the system, i.e. all the vector fields of the system vanish at $z$. 
The situation  is similar to that
of integrable Hamiltonian systems, where the local study of singular points can also be reduced to the study of fixed points.

\section{Linear integrable systems}

Let $(X_1,\hdots, X_p, F_1,\hdots, F_q)$ be an integrable system of type $(p,q)$ on a manifold $M$, and assume that $z \in M$ is a fixed point 
of the system, i.e.  $X_1(z) = \hdots = X_p(z) = 0$. Fix a local coordinate system around $z$. Denote by $Y_i$ the linear part of $X_i$ at $z$, and by 
$G_j$ the homogeneous part  (i.e. the non-constant terms of lowest degree in the Taylor expansion)  
of $F_j$, with respect to the above coordinate system. Then, the first terms of the Taylor expansion of the identities  $[X_i,X_k] = 0$ 
and $X_i(F_j) = 0$ show that the vector fields $Y_1,\hdots, Y_p$ commute with each other and have $G_1,\hdots, G_q$ as common first 
integrals. Hence, $(Y_1,\hdots,Y_p, G_1,\hdots, G_q)$ is again an integrable system of type $(p,q)$, provided that the independence conditions 
$Y_1 \wedge \hdots  \wedge Y_p \neq 0$ and $dG_1 \wedge \hdots \wedge dG_q \neq 0$ (almost everywhere) still hold. 

The above observations lead to the following definition:

\begin{defn}
 An integrable system $(Y_1,\hdots,Y_p, G_1,\hdots, G_q)$ of type $(p,q)$ is called {\bf linear} with respect to a given coordinate system
if the vector fields $Y_1,\hdots Y_p$ are linear and the functions $G_1,\hdots, G_q$ are homogeneous. If, moreover, it is obtained from another
integrable system $(X_1,\hdots, X_p, F_1,\hdots, F_q)$ by the above construction, then we will say that $(Y_1,\hdots,Y_p, G_1,\hdots, G_q)$
 is the {\bf linear part} of  the system $(X_1,\hdots, X_p, F_1,\hdots, F_q)$. If all the vector fields $Y_1,\hdots,Y_p$
are semisimple, then we will say that $(Y_1,\hdots,Y_p, G_1,\hdots, G_q)$ is a
{\bf nondegenerate linear integrable system}.
\end{defn}

Recall that the set of linear vector fields on $\mathbb{K}^m$, 
where $\mathbb{K}= \mathbb{R}$ or $\mathbb{C}$, is a Lie algebra which is 
naturally isomorphic to $gl(m,\mathbb{K})$. 
Any linear vector field admits a unique decomposition into the sum of its semisimple part and nilpotent
part (the Jordan decomposition), and it can be diagonalized over $\mathbb{C}$ if any only if it's semisimple, i.e. its nilpotent part is zero. It is also
well-known that if we have a family of commuting semisimple elements of $gl(m,\mathbb{C})$, 
then they can be simultaneously diagonalized 
over $\mathbb{C}$. Thus, if $(Y_1,\hdots,Y_p, G_1,\hdots, G_q)$ is a nondegenerate linear integrable system, then there exists a complex 
coordinate system in which the vector fields $Y_1,\hdots Y_p$ are diagonal.

The above notion of nondegeneracy is absolutely similar to the Hamiltonian case, where one also asks that the (Hamiltonian) vector fields $Y_i$
be semisimple. It is well-known that, already in the Hamiltonian case, not every integrable
linear system is nondegenerate. 

\begin{example}
In $\mathbb{R}^4$, take $\displaystyle G_1 = x_1y_1 - x_2 y_2, G_2 = y_1y_2, 
Y_1 = x_1 \frac{\partial}{\partial x_1} - y_1 \frac{\partial}{\partial y_1} - x_2 \frac{\partial}{\partial x_2} + y_2 \frac{\partial}{\partial y_2},
Y_2 = y_2 \frac{\partial}{\partial x_1} + y_1 \frac{\partial}{\partial x_2}.$ Then this is a degenerate (non-semisimple) integrable
linear Hamiltonian system.
 \end{example}

Let $(Y_1,\hdots,Y_p, G_1,\hdots, G_q)$ be a nondegenerate linear integrable system. We will work over $\mathbb{C}$, and assume that
the coordinate system is already chosen so that the vector fields $Y_1,\hdots, Y_p$ are linear:
\begin{equation}
Y_i = \sum_{i=j}^m c_{ij} x_j\frac{\partial}{\partial x_j} .
\end{equation}
The independence condition $Y_1 \wedge \hdots \wedge Y_p \neq 0$ means that the matrix $(c_{ij})^{i=1,\hdots,p}_{j=1,\hdots,m}$ is of rank
$p$. The set of polynomial common first integrals of $Y_1,\hdots, Y_p$ is the vector space spanned by the monomial functions
$\prod_{j=1}^m x_j^{\alpha_j}$ such that
\begin{equation} \label{eqn:resonance}
\sum_{j=1}^m \alpha_j c_{ij} = 0 \
\text{for all} \  i=1,\hdots, p.
\end{equation}
This linear equation is called the {\bf resonance equation} of the vector fields $Y_1,\hdots, Y_p$.

The set of nonnegative integer solutions of the resonance equation (\ref{eqn:resonance}) is the intersection 
\begin{equation}
S \cap \mathbb{Z}^m_+, 
\end{equation}
where 
\begin{equation}
S = \left\{ (\alpha_i) \in \mathbb{R}^m \ | \  \sum_{j=1}^m \alpha_j c_{ij} = 0 \ \text{for all} \  i=1,\hdots, p \right\} 
\end{equation}
 is the $q$-dimensional
space of all real solutions of (\ref{eqn:resonance}), and $\mathbb{Z}^m_+$ is the set of nonnegative $m$-tuples of integers.
The functional independence of $G_1,\hdots,G_q$ implies that this set $S \cap \mathbb{Z}^m_+$ must have dimension 
$q$ over $\mathbb{Z}$. In particular, the set $S \cap \mathbb{R}^m_+$ has dimension $q$ over $\mathbb{R},$
and the resonance equation (\ref{eqn:resonance}) is equivalent to a linear system of equations with integer coefficients.
In other words, using a linear transformation to replace $Y_i$ by new vector fields 
\begin{equation}
\tilde{Y_i} = \sum_{j} a_{ij} Y_j 
\end{equation}
with an appropriate invertible matrix $(a_{ij})$ with constant coefficients, we may assume that 
\begin{equation}
\tilde{Y}_i = \sum_{i=j}^m \tilde{c}_{ij} x_j \frac{\partial}{ \partial x_j},
\end{equation}
where 
\begin{equation}
\tilde{c}_{ij} = \sum_{k} a_{ik} c_{kj} \in \bbZ \ \forall \ i,j .
\end{equation}
Of course, if $(Y_1,\hdots,Y_p, G_1,\hdots, G_q)$ is an integrable
system, and $\tilde{Y_i} = \sum_{j} a_{ij} Y_j$ is an invertible linear transformation of the vector fields $Y_i$, then
$(\tilde{Y_1},\hdots,\tilde{Y_p}, G_1,\hdots, G_q)$ is again in integrable system which, from the geometric point of view, is
the same as the system $(Y_1,\hdots,Y_p, G_1,\hdots, G_q)$.

Conversely, if the first integrals are not yet given, but the coefficients $c_{ij}$ are integers, 
and the set of nonnegative solutions to the resonance equation (\ref{eqn:resonance}) has
dimension $q$, then we can choose $q$ linearly independent nonnegative integer solutions of (\ref{eqn:resonance}), 
and the $q$  corresponding monomial functions will be functionally independent common first integrals of $Y_1,\hdots,Y_p$, 
and we get an integrable system.

Notice that, given a set of linear vector fields as above, the choice of common first integrals in order to turn it into an integrable system 
is far from unique. Moreover, the algebra of polynomial first integrals does not admit a set of $q$ generators in general, even though
its functional dimension is equal to $q$. The following simple example illustrates the situation:
Consider a linear integrable 4-dimensional system of type $(1,3)$, i.e. with 1 vector field and 3 functions. The vector field is
$Y = x_1 \frac{\partial}{\partial x_1} + x_2 \frac{\partial}{\partial x_2} - x_3 \frac{\partial}{\partial x_3} - x_4 \frac{\partial}{\partial x_4}.$
The corresponding resonance equation is: $\alpha_1 + \alpha_2 - \alpha_3 - \alpha_4 = 0$.  The algebra of algebraic first integrals
is generated by the functions $x_1x_3, x_1x_4, x_2,x_3, x_2x_4$; it has functional dimension 3 but cannot be generated by just 3 functions.

\begin{remark}
If $z$ is an isolated singular point of $X_1$ in an integrable system $(X_1,\hdots, X_p, F_1,\hdots, F_q)$, 
then it will be automatically a fixed point of the system. Indeed, if  $X_i(z) \neq 0$ for some $i$, then due to the commutativity of $X_1$ with 
$X_i$, $X_1$ will vanish not only at $z$, but on the whole local  trajectory of $X_i$ which goes through $z$, and so $z$ 
will be a non-isolated singular point of $X_1$. In the definition of nondegeneracy of linear systems, we don't require the origin to be
an isolated singular point. For example, the system $(x_1 \frac{\partial}{ \partial x_1}, x_2)$ is a nondegenerate linear system of type $(1,1)$, for 
which the origin is a non-isolated singular point. 
\end{remark}

The independent vector fields $\sqrt{-1}\tilde{Y}_i = \sqrt{-1} \sum_{i=j}^m \tilde{c}_{ij} x_j \frac{\partial}{\partial x_j} $ 
with integer coefficients $\tilde{c}_{ij}$ generate an effective linear  torus action on $\mathbb{C}^m$. Thus, up to geometric equivalence, 
the classification of complex nondegenerate linear integrable systems of type $(p,q)$
is nothing but the classification of effective linear actions of  the torus 
$\mathbb{T}^p$ on $\mathbb{C}^m$, i.e. complex linear $m$-dimensional representations of $\bbT^p$. 
The classification in the real case is more complicated: two real linear systems may be non-equivalent 
but have the same  complexification.

\section{Linearization and rigidity of nondegenerate singularities}

\begin{defn} \label{defn:Nondegenerate}
A fixed  point of an integrable system  $(X_1,\hdots, X_p, F_1,\hdots, F_q)$ of type $(p,q)$
is called {\bf nondegenerate} if its linear part
is a nondegenerate linear integrable system. A singular point of an integrable system 
is called {\bf nondegenerate} if it becomes a nondegenerate fixed point after a reduction.
\end{defn}

\begin{remark}
Though the choice of first integrals is not important in Definition \ref{defn:GeometricEquivalence} 
of geometric  equivalence, the $q$-tuple $F_1,\hdots, F_q$ of first integrals in the above definition of 
nondegeneracy is assumed to be chosen so that  not only they are functionally independent, 
but their homogeneous parts are also functionally independent.  
(According to a simple analogue of Ziglin's lemma \cite{Ziglin-Branching1982}, 
in the analytic case, such a choice is always possible).
\end{remark}

\begin{theorem}[Geometric linearization]
\label{thm:linearization-nondegenerate}
An analytic (real or complex) integrable system near a nondegenerate fixed point is locally geometrically equivalent to a nondegenerate 
linear integrable system, namely its linear part. 
\end{theorem}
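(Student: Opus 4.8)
\smallskip

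The strategy is to reduce the statement to the convergent Poincar\'e--Dulac--Birkhoff normalization theorem of \cite{Zung-Birkhoff2005,Zung-Poincare2002} and then to read off the geometric linearization from the interplay between the commuting vector fields, their linear parts, and the algebra of first integrals. \textbf{Step 1 (normalization).} I would first apply that theorem to the analytic integrable system $(X_1,\dots,X_p,F_1,\dots,F_q)$ near the fixed point $z$: after an analytic local change of coordinates each $X_i$ is in Birkhoff normal form, i.e.\ it commutes with the semisimple part of the linear part $Y_j$ of every $X_j$. By nondegeneracy each $Y_j$ is already semisimple, so $[X_i,Y_j]=0$ for all $i,j$, and the commuting semisimple linear fields $Y_1,\dots,Y_p$ act diagonalizably on the local algebra of analytic functions (over $\bbC$, after a constant linear change with integral coefficients, the $\sqrt{-1}\,Y_j$ even generate an effective linear $\bbT^p$-action leaving the normalized $X_i$ invariant).

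\textbf{Step 2 (the first integrals are $Y$-invariant).} Let $\cF=\{f: X_i(f)=0\ \forall i\}$ be the algebra of first integrals. Since $[X_i,Y_j]=0$, the flows of the $Y_j$ preserve $\cF$, so $\cF$ decomposes into joint eigenspaces (weight spaces) of $Y_1,\dots,Y_p$. I claim only the weight-zero space survives: if $0\neq f\in\cF$ had nonzero weight, its lowest-degree homogeneous part $f_{\mathrm{low}}$ would be a nonzero homogeneous polynomial on which each $Y_i$ acts as multiplication by a scalar $\mu_i$ with $(\mu_1,\dots,\mu_p)\neq 0$; taking lowest-degree terms in $X_i(f)=0$ would force $\mu_i f_{\mathrm{low}}=0$ for all $i$, a contradiction. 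Hence $\cF\subseteq\bigcap_j\ker Y_j$. In particular, choosing (via Ziglin's lemma) first integrals $F_j\in\cF$ with functionally independent homogeneous parts $G_j$, one has $Y_i(G_j)=0$, so $(Y_1,\dots,Y_p,G_1,\dots,G_q)$ is exactly the nondegenerate linear part of the system.

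\textbf{Step 3 (coincidence of the distributions, and conclusion).} Form the involutive distributions $\cD_X=\langle X_1,\dots,X_p\rangle$, $\cD_Y=\langle Y_1,\dots,Y_p\rangle$ and $\cD=\langle X_1,\dots,X_p,Y_1,\dots,Y_p\rangle$; involutivity of $\cD$ uses $[X_i,X_j]=[Y_i,Y_j]=[X_i,Y_j]=0$. By Step 2 every element of $\cF$ is a common first integral of all the generators of $\cD$, so the leaves of $\cD$ lie in the common level sets of $\cF$, which are $(m-q)=p$-dimensional at generic points; hence $\cD$ has generic rank $\leq p$. Since $\cD_X\subseteq\cD$ already has generic rank $p$ (and so does $\cD_Y$), it follows that $\cD_X=\cD=\cD_Y$ on a dense open set, and therefore $X_i=\sum_j f_{ij}Y_j$ for functions $f_{ij}$ that are analytic on the open set $U=\{Y_1\wedge\dots\wedge Y_p\neq0\}$. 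From $0=[X_k,X_i]=\sum_j X_k(f_{ij})Y_j$ and $0=[Y_k,X_i]=\sum_j Y_k(f_{ij})Y_j$ one gets (over $U$, where the $Y_j$ are independent) $X_k(f_{ij})=Y_k(f_{ij})=0$, so each $f_{ij}$ is a $Y$-invariant first integral on $U$; combined with the fact that the normalized $X_i$ are series in resonant monomial vector fields, this is what I would use to extend the $f_{ij}$ analytically across $\{Y_1\wedge\dots\wedge Y_p=0\}$ to a neighborhood of $z$. Matching linear parts then gives $f_{ij}(z)=\delta_{ij}$, so $(f_{ij})$ is invertible near $z$, while $\ker\{Y_j\}\subseteq\ker\{X_i\}=\cF\subseteq\ker\{Y_j\}$ shows the two systems share the same algebra of first integrals. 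By Definition \ref{defn:GeometricEquivalence} the system---hence, up to the analytic diffeomorphism of Step 1, the original one---is geometrically equal to its linear part $(Y_1,\dots,Y_p,G_1,\dots,G_q)$, a nondegenerate linear integrable system.

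\textbf{Main obstacle.} The genuinely hard ingredient is Step 1, the \emph{convergence} of the normalization, which we are entitled to quote; everything afterwards is formal bookkeeping with weights, distributions and brackets. Within that bookkeeping, the one point that really needs care is the analytic extension of the coefficients $f_{ij}$ across the (possibly codimension-one) locus where $Y_1,\dots,Y_p$ become dependent: I expect this to follow from the normal-form structure---the identity $X_i=\sum_j f_{ij}Y_j$ should force the $f_{ij}$ to be honest analytic first integrals rather than merely meromorphic ones---but this is precisely the place where analyticity, as opposed to the formal normal form, is used in an essential way.
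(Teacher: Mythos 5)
Your proposal follows the same route as the paper: quote the convergent normalization/torus-action theorem of \cite{Zung-Poincare2002,Zung-Birkhoff2005} to reduce to the situation where the $X_i$ commute with their (simultaneously diagonalizable) linear parts $Y_j$, observe that the first integrals are $Y$-invariant, deduce from a dimension count on the level sets of $\cF$ that $\langle X_1,\dots,X_p\rangle=\langle Y_1,\dots,Y_p\rangle$ generically, write $X_i=\sum_j f_{ij}Y_j$, and read off invertibility of $(f_{ij})$ at the fixed point from the linear parts. Steps 1--2 and the dimension count in Step 3 are sound (with the minor caveat that the weight argument in Step 2 should be run on the individual weight components of a first integral, which are themselves first integrals because the torus action commutes with the $X_i$).

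The genuine gap is exactly the point you flag at the end: the analytic extension of the $f_{ij}$ across the locus $\{Y_1\wedge\dots\wedge Y_p=0\}$. Your distribution argument only produces the $f_{ij}$ on the open dense set where the $Y_j$ are independent, and a priori they are meromorphic with poles along the degeneracy locus; the remark that ``the normalized $X_i$ are series in resonant monomial vector fields'' does not by itself close this, because a resonant monomial vector field $x^\beta\partial/\partial x_j$ with $\beta_j=0$ is precisely of the form (non-analytic weight-zero function)$\times x_j\partial/\partial x_j$. The paper isolates this as a separate Division Lemma (Lemma \ref{lemma:division}) and proves it as follows: write $X_i=\sum_k g_{ik}Z_k$ with $Z_k=x_k\partial/\partial x_k$ and $x_kg_{ik}$ analytic; for each index $k$, either there is a monomial first integral $\prod_l x_l^{\alpha_l}$ with $\alpha_k\neq 0$, in which case the relation $\sum_l\alpha_l g_{il}=0$ (coming from $X_i(\prod x_l^{\alpha_l})=0$) forces $x_kg_{ik}$ to vanish on $\{x_k=0\}$ and hence $g_{ik}$ to be analytic; or else every first integral is $Z_k$-invariant, in which case the completeness of the system forces $Z_k$ itself to lie in the span of $Y_1,\dots,Y_p$, and $[Z_k,X_i]=0$ yields analyticity of $g_{ik}$ directly. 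This is the one nontrivial argument in the proof beyond the quoted normalization theorem, and it is the piece your writeup postpones; the first-integral hypothesis (equivalently, the tangency $X_i\wedge Y_1\wedge\dots\wedge Y_p=0$) enters essentially, since commutation with the $Y_j$ alone does not suffice. The real-analytic case, which you do not address, is handled in the paper by complexification, using that the normalization can be chosen real.
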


\begin{proof}
The proof is a consequence of  the main results of 
\cite{Zung-Poincare2002,Zung-Birkhoff2005}, which say that if a system is analytically integrable,
then in a neighborhood of any singular point it admits a local analytic effective torus action
(the torus is a real torus but it acts in the complex space), whose dimension is equal to the so called toric
degree of the system at that point, and the linearization of this torus action is 
equivalent to the Poincaré-Dulac normalization of the system. (This torus action is intrinsic to the system and
is defined as a kind of double commutator, i.e. any vector field which commutes with the system also commutes with this torus action).
It remains to prove  that, in the nondegenerate case, the Poincaré-Dulac 
normalization is actually a geometric linearization of the system.

Indeed, in the nondegenerate complex analytic case, 
it follows directly from the definition of the toric degree (see \cite{Zung-Poincare2002} or \cite{Zung-Torus2006}), that the toric degree
at the isolated singular point is equal to $p$, and so there is an effective analytic torus action of dimension $p$ around the singular point 
which preserves the system.  
By a local diffeomorphism, we may assume that this torus action is linear and is generated by
$p$ vector fields $\sqrt{-1}\tilde{Y}_1,\hdots,\sqrt{-1}\tilde{Y}_p$, where each $\tilde{Y}_i$ is linear diagonal with integer coefficients:
$\tilde{Y}_i = \sum_{i=j}^m  \tilde{c}_{ij} x_j \frac{\partial}{\partial x_j},$ $c_{ij} \in \mathbb{Z}$ for all $i,j.$ (The Poincaré-Dulac
normalization amounts to the linearization of this torus action, see \cite{Zung-Poincare2002}).

Moreover, from the construction of this torus action we have that
$\tilde{Y}_i \wedge X_1 \wedge \hdots \wedge X_p = 0$ for all $i=1,\hdots, p$ (because the torus action also preserves the first integrals so its 
generators must be tangent to the complex common level sets of the first integrals). Since $ \tilde{Y}_1,\hdots, \tilde{Y}_p$ are independent, by 
dimensional consideration, the inverse is also true: $X_i \wedge \tilde{Y}_1 \hdots \wedge \tilde{Y}_p = 0$ for all $i=1,\hdots, p.$ 
Lemma \ref{lemma:division} below says that we can write 
$X_i = \sum_{j} f_{ij} \tilde{Y}_i$ in a unique way, where $f_{ij}$ are local analytic functions, which are also first integrals of the system.
The fact that the matrix $(f_{ij})$ is invertible, i.e. it has non-zero determinant at $z$, is also clear, because $(\tilde{Y}_1, \hdots, \tilde{Y}_p)$
are nothing but a linear transformation of the linear part of $(X_1,\hdots,X_p).$

What we have proved is that,  near a nondegenerate fixed point, an integrable system is geometrically equivalent to its linear part, at least
in the complex analytic case. In the real analytic case, the vector fields $(\tilde{Y}_1,\hdots,\tilde{Y}_p)$ are not real in general, but the proof
will remain the same after a complexification, because the Poincaré-Dulac normalization in the real case can be chosen to be real 
(see \cite{Zung-Poincare2002,Zung-Birkhoff2005}).  
\end{proof}

\begin{lemma}[Division lemma] \label{lemma:division}
If  $(Y_1,\hdots,Y_p, G_1,\hdots, G_q)$ is a nondegenerate linear integrable system, and $X$ is a local analytic vector field 
which commutes with $Y_1,\hdots,Y_p$ and such that $X \wedge Y_1 \wedge \hdots \wedge Y_p = 0$, then we can write 
$X = \sum f_i Y_i$ in a unique way, where $f_i$ are local analytic functions which are common first integrals of $Y_1,\hdots, Y_p$.
\end{lemma}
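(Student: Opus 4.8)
The plan is to put $(Y_1,\dots,Y_p)$ into a convenient normal form, read off from that form which analytic functions the $f_i$ must be, and then check that those functions are analytic and are first integrals; the only delicate point will be the analyticity, and that is precisely where the hypothesis $X\wedge Y_1\wedge\cdots\wedge Y_p=0$ enters. First I would reduce to a normal form: since the linear integrable system is nondegenerate, by the remarks above, after complexifying and a linear change of coordinates we may assume $Y_i=\sum_{j=1}^m c_{ij}\,x_j\,\partial/\partial x_j$ with $(c_{ij})$ of rank $p$; permuting the coordinates so that the first $p$ columns are independent and then replacing $(Y_1,\dots,Y_p)$ by a suitable constant invertible linear combination, I would arrange $Y_i=x_i\,\partial/\partial x_i+\sum_{s=p+1}^m c_{is}\,x_s\,\partial/\partial x_s$ for $i=1,\dots,p$. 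Such a recombination is a special case of geometric equivalence: it still commutes with $X$, it only multiplies $X\wedge Y_1\wedge\cdots\wedge Y_p$ by a nonzero constant, and an expansion of $X$ with first-integral coefficients in the new fields gives one in the old (and conversely), so it is harmless for both existence and uniqueness. Write $X=\sum_k a_k\,\partial/\partial x_k$ with $a_k$ local analytic.

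Then I would dispatch uniqueness and identify the coefficients. In this normal form the only $Y_j$ contributing a $\partial/\partial x_i$-term (for $i\le p$) is $Y_i$ itself, so the $\partial/\partial x_i$-component of $\sum_j f_jY_j$ is $f_ix_i$; hence any expansion $X=\sum f_iY_i$ forces $f_i=a_i/x_i$. This gives uniqueness at once and reduces the lemma to three things: (a) each $a_i/x_i$ $(i\le p)$ is analytic; (b) $X=\sum_{i\le p}(a_i/x_i)Y_i$; (c) the $a_i/x_i$ lie in $\cF$. Once (a) is known, (b) follows by comparing components — the $\partial/\partial x_i$-components $(i\le p)$ agree by construction, the $\partial/\partial x_s$-components $(s>p)$ by the meromorphic identities in the next step — and (c) follows from $0=[X,Y_l]=\sum_i\big(f_i[Y_i,Y_l]-Y_l(f_i)Y_i\big)=-\sum_i Y_l(f_i)Y_i$ together with uniqueness, which forces $Y_l(f_i)=0$ for all $i,l$.

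The core is (a). I would extract from the wedge hypothesis, for each $s>p$, the vanishing of the $(p+1)\times(p+1)$ minor of the matrix whose rows are the coefficient $m$-tuples of $X,Y_1,\dots,Y_p$, taken on the columns $\{1,\dots,p,s\}$; expanding this determinant and dividing by the (generically nonzero) factor $x_1\cdots x_p\cdot x_s$ gives the identity of meromorphic functions $a_s/x_s=\sum_{i=1}^p c_{is}\,(a_i/x_i)$. Now fix $i_0\le p$. If $Y_{i_0}=x_{i_0}\,\partial/\partial x_{i_0}$ (that is, $c_{i_0 s}=0$ for all $s>p$), then reading off the $\partial/\partial x_{i_0}$-component of $[X,Y_{i_0}]=0$ gives $a_{i_0}=x_{i_0}\,\partial a_{i_0}/\partial x_{i_0}$, so $a_{i_0}$ is homogeneous of degree $1$ in $x_{i_0}$ and $x_{i_0}\mid a_{i_0}$. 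Otherwise choose $s_0>p$ with $c_{i_0 s_0}\ne 0$ and expand the identity $a_{s_0}/x_{s_0}=\sum_i c_{is_0}(a_i/x_i)$ as a Laurent series in $x_{i_0}$: the left-hand side and every term $c_{is_0}(a_i/x_i)$ with $i\ne i_0$ are holomorphic in $x_{i_0}$, so the remaining term $c_{i_0 s_0}(a_{i_0}/x_{i_0})$ must be too, whence $a_{i_0}/x_{i_0}$ has no pole along $\{x_{i_0}=0\}$, i.e. $x_{i_0}\mid a_{i_0}$. Either way (a) holds, and the proof closes.

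The hard part is exactly this last step, $x_i\mid a_i$. Commutation with $Y_1,\dots,Y_p$ by itself is far too weak, because the algebra of analytic vector fields commuting with all $Y_i$ — equivalently, invariant under the torus action they generate — strictly contains $\sum_i\cF Y_i$ in general; for instance $x_1x_2\,\partial/\partial x_3$ commutes with $x_1\,\partial/\partial x_1-x_2\,\partial/\partial x_2$ but is not of the form $\sum f_iY_i$. It is the wedge hypothesis that eliminates such extra invariant fields, and essentially all the work is in converting $X\wedge Y_1\wedge\cdots\wedge Y_p=0$ into the pole-cancellation argument above. Geometrically this is the statement that near a generic point $X$, being tangent to the $p$-dimensional orbits of the $\bbC^p$-action generated by the $Y_i$ and commuting with their generators, restricts on each orbit to a constant-coefficient combination $\sum\lambda_iY_i$; the $\lambda_i$ are automatically common first integrals off the coordinate hyperplanes, and what must be established is that they extend analytically across those hyperplanes — which is what the computation above provides.
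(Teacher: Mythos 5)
Your proof is correct, and its skeleton coincides with the paper's: diagonalize the $Y_i$, note that any expansion forces $f_i=a_i/x_i$ (with $a_i$ the $\partial/\partial x_i$-coefficient of $X$), reduce everything to the divisibility $x_i\mid a_i$, and handle the degenerate case where $x_{i_0}\,\partial/\partial x_{i_0}$ lies in the span of the $Y_j$ by the commutator relation $a_{i_0}=x_{i_0}\,\partial a_{i_0}/\partial x_{i_0}$. The genuine difference is in how the wedge hypothesis is converted into linear relations among the quotients $g_i=a_i/x_i$. The paper works on the kernel side: each monomial first integral $\prod_j x_j^{\alpha_j}$ is annihilated by $X$ (tangency to the level sets), giving $\sum_j\alpha_j g_j=0$, and a resonance with $\alpha_{i_0}\neq 0$ isolates $g_{i_0}$ against terms regular along $\{x_{i_0}=0\}$; when every resonance has $\alpha_{i_0}=0$, the completeness of the integrable system is invoked to conclude that $x_{i_0}\,\partial/\partial x_{i_0}$ belongs to the span of the $Y_j$. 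You work on the row-space side: the vanishing of the $(p+1)\times(p+1)$ minors gives $g_s=\sum_i c_{is}g_i$ for $s>p$ directly, and your normalization $c_{ij}=\delta_{ij}$ for $j\le p$ makes the case split purely mechanical. The two systems of relations are equivalent, but your derivation never uses the first integrals $G_1,\hdots,G_q$ nor the completeness argument, so it proves the lemma under the weaker hypothesis that the $Y_i$ are merely commuting, semisimple and generically independent --- a modest gain in economy; the paper's route, by contrast, keeps the resonance lattice, the object the rest of the paper manipulates, in view.
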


{\bf Proof}. Without loss of generality, we may assume that $Y_i = \sum_{j} c_{ij} Z_j$, where $c_{ij}$ are integers and 
$Z_i = x_i \frac{\partial}{ \partial x_i}$ in some coordinate system $(x_1,\hdots, x_m)$. 
We will write $X = \sum_i g_i Z_i,$ where $x_i g_i$ are analytic functions.
The main point is to prove that $g_i$ are analytic functions, and the rest of the lemma will follow easily. Let $\prod_i x_i^{\alpha_i}$
be a polynomial first integral of the linear system. Then we also have $X(\prod_i x_i^{\alpha_i}) = 0,$ which implies that
$\sum_i \alpha_i g_i = 0.$ If $\alpha_1 \neq 1$ then  $x_1g_1 = (-\sum_{i=2}^m x_1g_i)/\alpha_1$ vanishes when
$x_1 = 0,$ and so $x_1g_1$ is divisible by $x_1$, which means that $g_1$ is analytic. Thus, for each $i$, if we can choose
a monomial first integral $\prod_i x_i^{\alpha_i}$ such that $\alpha_i \neq 0,$ then $g_i$ is analytic. Assume now that
all monomial first integrals $\prod_i x_i^{\alpha_i}$ must have $\alpha_1 = 0.$ It means that all the first integrals are also
invariant with respect to the vector field $Z_1 = x_1 \frac{\partial}{ \partial x_1}$. Then $Z_1$ must be a linear combination of
$Y_1,\hdots, Y_p$ (because the system is already ``complete'' and one cannot add another independent commuting vector field to it),
and we have $[Z_1, X] = 0.$ From this relation it follows easily that $g_1$ is also analytic in this case. Thus, all functions $g_i$
are analytic. $\square$

Theorem \ref{thm:linearization-nondegenerate} can be extended to the case of non-fixed 
nondegenerate singular points in an obvious way, with the same proof, using our results \cite{Zung-Poincare2002,Zung-Birkhoff2005} 
on the toric characterization of local normalizations of vector fields:
 
\begin{thm} \label{thm:linearization-nondegenerate2}
 Any analytic integrable dynamical system near a nondegenerate singular point is locally geometrically 
equivalent to a direct product of a linear nondegenerate integrable system and a constant (regular) integrable system.
\end{thm}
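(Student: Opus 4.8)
The plan is to reduce the non-fixed case to the fixed-point case already treated in Theorem \ref{thm:linearization-nondegenerate}, using the reduction procedure described in Section 2 together with the toric characterization of normalizations from \cite{Zung-Poincare2002,Zung-Birkhoff2005}. So let $z$ be a nondegenerate singular point of the analytic integrable system $(X_1,\hdots,X_p,F_1,\hdots,F_q)$ which is not necessarily a fixed point. By definition of nondegeneracy for singular points, $z$ becomes a nondegenerate \emph{fixed} point after a reduction; concretely, after reordering, there is some $k<p$ with $X_{k+1}\wedge\hdots\wedge X_p(z)\neq 0$, and one can choose an analytic coordinate system $(x_1,\hdots,x_{p-k},\,y)$ near $z$ in which $X_{k+1}=\partial/\partial x_1,\hdots,X_p=\partial/\partial x_{p-k}$. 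First I would verify, exactly as in Section 2, that the commutation relations $[X_i,X_j]=0$ force $X_1,\hdots,X_k$ and the first integrals $F_1,\hdots,F_q$ to be independent of $x_1,\hdots,x_{p-k}$, so that the data restrict to an integrable system of type $(k,q)$ on the transversal slice $\{x_1=\hdots=x_{p-k}=0\}$, for which $z$ is now a nondegenerate fixed point.

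Next I would apply Theorem \ref{thm:linearization-nondegenerate} to this reduced system of type $(k,q)$: it is locally geometrically equivalent, via an analytic diffeomorphism $\phi$ of the slice, to its linear part, a nondegenerate linear integrable system $(Y_1,\hdots,Y_k,G_1,\hdots,G_q)$ in suitable coordinates on the slice. Pulling this diffeomorphism back to the full neighborhood by acting as the identity on the $x_1,\hdots,x_{p-k}$ directions (legitimate because the system does not depend on these coordinates), I obtain a local analytic diffeomorphism of the original neighborhood of $z$ carrying $(X_1,\hdots,X_p,F_1,\hdots,F_q)$ to the system $(Y_1,\hdots,Y_k,\,\partial/\partial x_1,\hdots,\partial/\partial x_{p-k},\,G_1,\hdots,G_q)$ — up to the geometric-equivalence ambiguity of multiplying the vector fields by a matrix of first integrals, which is precisely what "geometrically equivalent" in Definition \ref{defn:GeometricEquivalence} allows. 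This last system is by construction the direct product of the linear nondegenerate integrable system $(Y_1,\hdots,Y_k,G_1,\hdots,G_q)$ on $\bbK^{m-(p-k)}$ with the constant (regular) integrable system $(\partial/\partial x_1,\hdots,\partial/\partial x_{p-k})$ on $\bbK^{p-k}$, which is the claimed normal form.

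The point requiring the most care is the compatibility between the two uses of "geometric equivalence": after rectifying $X_{k+1},\hdots,X_p$ one has modified the $X_i$ with $i\le k$ (they may now involve the rectified directions and their coefficients may be nonconstant), and one must check that the matrix relating the new vector fields to the old ones still has entries that are first integrals of the full system and nonvanishing determinant at $z$, so that no geometric information is lost in the reduction. I would handle this exactly as in the proof of Theorem \ref{thm:linearization-nondegenerate}: use the intrinsic local torus action furnished by \cite{Zung-Poincare2002,Zung-Birkhoff2005} — whose toric degree at $z$ now equals $k$ rather than $p$ — together with the Division Lemma \ref{lemma:division} applied on the slice, to express the reduced vector fields in terms of the linear torus generators with first-integral coefficients; the remaining directions $\partial/\partial x_1,\hdots,\partial/\partial x_{p-k}$ are untouched. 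The rest is the bookkeeping of assembling the slice statement and the rectified directions into a genuine direct product, which is routine.
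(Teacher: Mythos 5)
Your overall strategy --- rectify $X_{k+1},\hdots,X_p$, pass to the reduced type-$(k,q)$ system with a nondegenerate fixed point on a transversal slice, apply Theorem \ref{thm:linearization-nondegenerate} there, and reassemble --- is exactly the reduction the paper has in mind (its own ``proof'' is a single sentence asserting that the fixed-point argument extends ``in an obvious way''). However, there is a genuine gap at the reassembly step. After rectification, each $X_i$ with $i\le k$ has the form
\begin{equation*}
X_i \;=\; \tilde X_i \;+\; \sum_{l=1}^{p-k} c_{il}(y)\,\frac{\partial}{\partial x_l},
\end{equation*}
where $\tilde X_i$ is tangent to the slice and $y$ denotes the slice coordinates. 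A diffeomorphism of the form $(x,y)\mapsto(x,\phi(y))$ linearizes the slice part but does not touch the cross terms $\sum_l c_{il}\,\partial/\partial x_l$, and the coefficients $c_{il}$ are in general \emph{not} first integrals of the system: commutativity only yields $\tilde X_j(c_{il})=\tilde X_i(c_{jl})$, not $\tilde X_j(c_{il})=0$. So the system you land on is not yet a matrix-of-first-integrals multiple of the product generators $(Y_1,\hdots,Y_k,\partial/\partial x_1,\hdots,\partial/\partial x_{p-k})$, and the Division Lemma ``applied on the slice'' cannot see these terms. Already for $p=2$, $q=0$, $m=2$, with $X_1=x_1\,\partial/\partial x_1+c(x_1)\,\partial/\partial x_2$ and $X_2=\partial/\partial x_2$, one needs the extra shear $x_2\mapsto x_2-\int_0^{x_1}\frac{c(t)-c(0)}{t}\,dt$ to reach the product form; no slice argument and no matrix of first integrals (which here are only constants) produces it.

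The repair is not difficult but is a genuinely separate step, which your ``routine bookkeeping'' conceals. Decompose each $c_{il}$ into its resonant part --- which \emph{is} a first integral of the linearized reduced system and can be absorbed into the equivalence matrix, since $\partial/\partial x_l$ is itself one of the product generators --- plus a nonresonant part, and kill the latter by a further change of coordinates $x_l\mapsto x_l-\psi_l(y)$, where the $\psi_l$ solve the cohomological equations $\tilde X_i(\psi_l)=c_{il}^{\mathrm{nonres}}$; these are compatible by the commutation relations and solvable without small divisors because the $Y_j$ may be taken with integer eigenvalue coefficients. Equivalently, one must exhibit a germ of submanifold through $z$, transverse to the local orbit of $X_{k+1},\hdots,X_p$ and invariant under $X_1,\hdots,X_k$ (compare part (ii) of Remark \ref{remark:referee}); this invariant transversal is precisely the nontrivial content of the non-fixed case, and your write-up currently assumes it.
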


We also have an extension of Ito's theorem \cite{Ito-Nonresonant} to the non-Hamiltonian case. Ito's theorem says that, an analytic integrable 
Hamiltonian system at a non-resonant singular point (without the requirement of nondegeneracy of the momentum map at that point) can
also be locally geometrically linearized (i.e. locally one can choose the momentum map so that the system becomes nondegenerate and geometrically
linearizable). For Hamiltonian vector fields, there are many auto-resonances due to their Hamiltonian nature, which are not counted as
resonance in the Hamiltonian case. So, in the non-Hamiltonian case, we have to replace the adjective ``non-resonant'' by ``minimally-resonant'':

\begin{defn}
A vector field $X$ in a integrable dynamical system $(X_1 = X, \hdots, X_p, F_1,\hdots, F_q)$ of type $(p,q)$ 
is called {\bf minimally resonant} at a singular point $z$ if its toric degree at $z$ is equal to $p$ (maximal possible). 
 \end{defn}

\begin{theorem}  
Minimally-resonant singular points of analytic integrable systems are also 
locally geometrically linearizable in the sense that one can change the 
auxiliary commuting vector fields (keeping the first vector field and the functions intact) 
in order to obtain a new integrable system which is locally geometrically linearizable.
\end{theorem}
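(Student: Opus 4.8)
The plan is to reduce to the case of a fixed point and then re-run the torus-action argument from the proof of Theorem~\ref{thm:linearization-nondegenerate}, but exploiting minimal resonance to \emph{manufacture} the linearizing auxiliary vector fields out of the intrinsic torus action, leaving $X_1$ and $F_1,\dots,F_q$ untouched. First, exactly as in Section~2 and in Theorem~\ref{thm:linearization-nondegenerate2}, a local reduction together with a direct product by a regular system allows us to assume that $z$ is a fixed point of the system; this reduction does not touch $X_1$, since a vector field not vanishing at $z$ has toric degree $0$ there and hence cannot be minimally resonant, so $X_1(z)=0$.

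Next, since the toric degree of $X=X_1$ at $z$ equals $p$, the results of \cite{Zung-Poincare2002,Zung-Birkhoff2005} provide a local analytic effective $\mathbb{T}^p$-action near $z$ preserving the whole system; after an analytic coordinate change (which can be taken real in the real case) this action is linear, generated by $\sqrt{-1}\tilde Y_1,\dots,\sqrt{-1}\tilde Y_p$ with $\tilde Y_i=\sum_j\tilde c_{ij}x_j\,\partial/\partial x_j$, $\tilde c_{ij}\in\mathbb{Z}$ and $(\tilde c_{ij})$ of rank $p$. Because the torus preserves the $F_j$, its generators are tangent to the common level sets, so $\tilde Y_i\wedge X_1\wedge\cdots\wedge X_p=0$; as the $\tilde Y_i$ are independent almost everywhere, a dimension count gives $X_i\wedge\tilde Y_1\wedge\cdots\wedge\tilde Y_p=0$ for every $i$, and the Division Lemma~\ref{lemma:division} (applied to the nondegenerate linear system formed by the diagonal $\tilde Y_i$ together with suitable monomial first integrals) yields unique local analytic first integrals $f_{ij}$ of the $\tilde Y_i$ with $X_i=\sum_j f_{ij}\tilde Y_j$.

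The new ingredient is to take linear parts in the exact identity $X_1=\sum_j f_{1j}\tilde Y_j$: since each $\tilde Y_j$ is linear, the linear part of $X_1$ equals $\sum_j f_{1j}(z)\tilde Y_j$, which is diagonal, hence semisimple, and non-zero (minimal resonance again forbids $X_1$ from having zero linear part). Pick an invertible constant matrix over $\mathbb{K}$ whose first row is $(f_{11}(z),\dots,f_{1p}(z))$, let $\tilde Y'_1,\dots,\tilde Y'_p$ be the resulting new basis of $\mathrm{span}\{\tilde Y_1,\dots,\tilde Y_p\}$, so that $\tilde Y'_1$ is exactly the linear part of $X_1$, and define the new system by keeping $X_1$ and $F_1,\dots,F_q$ and setting $X'_i:=\tilde Y'_i$ for $i=2,\dots,p$. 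Then the $X'_i$ commute, the $F_j$ stay common first integrals, and writing $X_1=\sum_j h_{1j}\tilde Y'_j$ one checks that $h_{11}(z)=1$, whence $X_1\wedge X'_2\wedge\cdots\wedge X'_p=h_{11}\,\tilde Y'_1\wedge\cdots\wedge\tilde Y'_p\neq0$ almost everywhere; so $(X_1,X'_2,\dots,X'_p,F_1,\dots,F_q)$ is an integrable system whose linear part is $(\tilde Y'_1,\dots,\tilde Y'_p,G_1,\dots,G_q)$ with $G_j$ the homogeneous parts of $F_j$. The algebra of first integrals of this new system coincides near $z$ with that of the $\tilde Y_i$, which has functional dimension $q$; so, as in the Remark after Definition~\ref{defn:Nondegenerate} and using that this algebra is generated by monomials, we may replace the $F_j$ by monomial first integrals with functionally independent homogeneous parts without changing the geometric equivalence class of the system. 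The linear part is then a nondegenerate linear integrable system, so the new system is a nondegenerate fixed point, and Theorem~\ref{thm:linearization-nondegenerate} gives the desired geometric linearization; the real case follows by complexification just as in that theorem.

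The crux is the independence condition $X_1\wedge X'_2\wedge\cdots\wedge X'_p\neq0$ once we are forbidden to modify $X_1$: this is precisely what the identity ``linear part of $X_1$ equals $\sum_j f_{1j}(z)\tilde Y_j$'' buys us — it shows that the linear part of $X_1$ is already semisimple and may be taken as one member $\tilde Y'_1$ of a basis of the torus Lie algebra, which is the only spot where minimal resonance (beyond the mere existence of the torus action) is genuinely needed. The remaining points that need care are the reduction of the non-fixed-point case without disturbing $X_1$, and the bookkeeping that passing from the $F_j$ to monomial first integrals leaves the geometric system — hence the claim of geometric linearization — unaffected.
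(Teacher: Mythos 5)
Your proposal is correct and takes essentially the same route as the paper, whose proof is only the one-line remark that the argument is ``similar to the proof of Theorem \ref{thm:linearization-nondegenerate} and a direct consequence of the main results of \cite{Zung-Poincare2002}'': you use the intrinsic $\mathbb{T}^p$-action supplied by the toric degree, the Division Lemma, and the observation that minimal resonance forces the linear part of $X_1$ to be a nonzero element of the torus Lie algebra, which is exactly the intended mechanism. The only substance you add beyond the paper is the explicit choice of the new auxiliary generators $\tilde Y'_2,\dots,\tilde Y'_p$ and the verification of the independence condition via $h_{11}(z)=1$, which fills in detail the paper leaves implicit.
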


\begin{proof}
The proof is similar to the proof of Theorem \ref{thm:linearization-nondegenerate} 
and is also a direct consequence of the main results of \cite{Zung-Poincare2002}. 
\end{proof}

In order to give another justification for our notion of nondegeneracy of singular points of integrable non-Hamiltonian systems,  we will also
show that such singularities are deformation rigid:

\begin{theorem}[Rigidity of nondegenerate singularities] \label{thm:rigid}
 Let 
\begin{equation}
(X_{1, \theta},\hdots, X_{p, \theta}, F_{1, \theta}, \hdots, F_{q, \theta}) 
\end{equation}
be an analytic family of integrable systems of type $(p,q)$
depending on a parameter $\theta$ which can be multi-dimensional: $\theta = (\theta_1,\hdots,\theta_s)$, 
and assume that $z_0$ is a nondegenerate fixed point when $\theta = 0$. Then there exists a local 
analytic family of fixed points $z_{\theta}$, such that $z_{\theta}$ is a fixed point of 
$(X_{1, \theta},\hdots, X_{p, \theta}, F_{1, \theta}, \hdots, F_{q, \theta})$ for each $\theta$, and moreover,
up to geometric equivalence, the local structure of  $(X_{1, \theta},\hdots, X_{p, \theta}, F_{1, \theta}, \hdots, F_{q, \theta})$ at $z_\theta$ 
does not depend on $\theta$.
\end{theorem}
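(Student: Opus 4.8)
The plan is to reduce the statement to the analytic geometric linearization already available (Theorem~\ref{thm:linearization-nondegenerate}) by passing to the \emph{extended system} on $M\times\Theta$, where $\Theta\ni\theta$ is the germ of the parameter space. Let $\bar X_i$ denote $X_{i,\theta}$ regarded as a vector field on $M\times\Theta$ tangent to the fibres $M\times\{\theta\}$, put $\bar F_j=F_{j,\theta}$, and let $\theta_1,\dots,\theta_s$ be the parameter functions. Then the $\bar X_i$ commute, the $\theta_k$ are common first integrals of all $\bar X_i$, and — choosing, by the analogue of Ziglin's lemma quoted after Definition~\ref{defn:Nondegenerate}, the $\bar F_j$ so that their homogeneous parts together with $\theta_1,\dots,\theta_s$ are functionally independent — the tuple $(\bar X_1,\dots,\bar X_p,\bar F_1,\dots,\bar F_q,\theta_1,\dots,\theta_s)$ is an analytic integrable system of type $(p,q+s)$ on the $(m+s)$-manifold $M\times\Theta$, with $(z_0,0)$ as a fixed point.

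The heart of the argument is to show that $(z_0,0)$ is a \emph{nondegenerate} fixed point of the extended system. Put $z_0=0$, let $C_i$ be the $m\times m$ matrix of the linear part of $X_{i,0}$ at $0$ (semisimple, by the hypothesis at $\theta=0$), and let $D_i$ be the $m\times s$ matrix whose $k$-th column is the derivative of $X_{i,\theta}(0)$ with respect to $\theta_k$ at $\theta=0$; then the linear part of $\bar X_i$ at $(0,0)$ acts on the $M$-factor by $x\mapsto C_ix+D_i\theta$ and trivially on $\Theta$, and a short linear-algebra computation (the only eigenvalue that can acquire a Jordan block is $0$) shows this is semisimple precisely when $\operatorname{im}D_i\subseteq\operatorname{im}C_i$. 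I would deduce this from the integrability identities: reading $[\bar X_i,\bar X_j]=0$ off at first order at $(0,0)$ gives $C_iD_j=C_jD_i$, and decomposing into the joint weight spaces $W_\lambda$ of $C_1,\dots,C_p$ this forces the $W_\lambda$-component of each column of $D_i$ to vanish whenever $\lambda\neq0$ and $\lambda_i=0$; reading $\bar X_i(\bar F_j)=0$ off at the constant and linear orders in $x$ (which bring in $dF_{j,\theta}(0)$ and the Hessian of $F_{j,\theta}$ at $0$), together with the completeness and functional independence of the homogeneous parts of the $F_{j,0}$, forces the remaining, joint-kernel, component of each column of $D_i$ to vanish too, giving $\operatorname{im}D_i\subseteq\operatorname{im}C_i$. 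With the extended system nondegenerate at $(0,0)$, Theorem~\ref{thm:linearization-nondegenerate} makes it geometrically equivalent there to its linear part, so its fixed-point set — which is exactly $\{(z,\theta):X_{i,\theta}(z)=0\ \text{for all}\ i\}$ — is near $(0,0)$ a submanifold whose tangent space is the joint kernel of those linear parts; the inclusions $\operatorname{im}D_i\subseteq\operatorname{im}C_i$ together with $C_iD_j=C_jD_i$ guarantee that this joint kernel projects onto $T_0\Theta$, so the fixed-point set submerses onto $\Theta$ and hence admits a local analytic section $\theta\mapsto z_\theta$ with $z_0\mapsto z_0$. After a translation analytic in $\theta$ we may assume $z_\theta\equiv 0$.

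It remains to invoke the toric description of \cite{Zung-Poincare2002,Zung-Birkhoff2005}: at the nondegenerate fixed point $(0,0)$ the extended system carries an intrinsic effective analytic $\mathbb{T}^p$-action preserving it, whose linearization realizes the geometric linearization. Since the $\theta_k$ are first integrals, this action is tangent to $\{\theta=\mathrm{const}\}$, hence to every fibre $M\times\{\theta\}$, so it restricts for each small $\theta$ to a local effective analytic $\mathbb{T}^p$-action preserving the member $(X_{1,\theta},\dots,X_{p,\theta})$; its integer weights at a torus-fixed point of the fibre depend continuously, hence locally constantly, on $\theta$, and at $\theta=0$ coincide with the weights of the linear part $L_0$ of the original system at $z_0$. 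Arguing exactly as in the proof of Theorem~\ref{thm:linearization-nondegenerate} (via the Division Lemma~\ref{lemma:division}), the presence of this $\mathbb{T}^p$-action makes each member nondegenerate at $z_\theta$ and geometrically equivalent there to its linear part, which is a nondegenerate linear integrable system whose torus is the restricted one; but such a system is determined up to geometric equivalence by that torus action, i.e.\ by its weights (over $\mathbb{C}$; the real case follows by complexification, as at the end of the proof of Theorem~\ref{thm:linearization-nondegenerate}). As the weights are constant in $\theta$, the linear part of $(X_{1,\theta},\dots,X_{p,\theta})$ at $z_\theta$ is geometrically equivalent to $L_0$ for every small $\theta$, which is the rigidity assertion.

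The main obstacle is the middle step. The equations $X_{i,\theta}(z)=0$ are overdetermined, so that neither the persistence of the fixed point nor the semisimplicity of the linear part of the extended system is a formality; producing it requires using the integrability relations to second order — commutation, annihilation of the first integrals, and the completeness of the first integrals — and a judicious choice of the latter. Everything past the construction of $z_\theta$ is a routine application of the analytic toric normal form.
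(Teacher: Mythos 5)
Your proof takes exactly the paper's route: assemble the family into a single integrable system of type $(p,q+s)$ on $M\times\Theta$ with the parameters $\theta_k$ adjoined as extra first integrals, and apply the analytic geometric linearization theorem (Theorem \ref{thm:linearization-nondegenerate}) to this extended system at $(z_0,0)$. The paper's proof is three lines and simply asserts that $(z_0,0)$ is still a nondegenerate fixed point of the big system; your middle step --- deriving $\operatorname{im}D_i\subseteq\operatorname{im}C_i$ from the commutation relations on the nonzero weight spaces and from the first-integral identities on the joint kernel --- is a correct verification of precisely the assertion the paper leaves to the reader, and the rest (the section $\theta\mapsto z_\theta$ through the joint kernel, the fibrewise restriction of the linearization) is what the paper means by ``the desired result.''
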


\begin{proof}
We can put the integrable systems in this family together to get one ``big'' integrable system of type $(p, q+s)$, with the last
coordinates $x_{m+1}, \hdots, x_{m+s}$ as additional first integrals. Then $z_0$ is still a nondegenerate fixed point for this big integrable system,
and we can apply Theorem (\ref{thm:linearization-nondegenerate}) to get the desired result.   
\end{proof}

\section{Linearization of smooth integrable systems}

In the smooth case, we still have the same definitions of linear part, geometric equivalence, 
nondegeneracy and geometric linearization as in the analytic case. 
We have the following conjecture, which is the smooth version of Theorem \ref{thm:linearization-nondegenerate2}:

\begin{conjecture}
 Any smooth integrable dynamical system near a nondegenerate singular point is locally geometrically 
smoothly equivalent to a direct product of a linear nondegenerate integrable system and a constant system..
\end{conjecture}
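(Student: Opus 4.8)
The plan is to carry the torus-action argument behind Theorem~\ref{thm:linearization-nondegenerate} and Theorem~\ref{thm:linearization-nondegenerate2} over to the smooth category, where the analytic Poincaré--Dulac--Birkhoff normalization of \cite{Zung-Poincare2002,Zung-Birkhoff2005} is no longer at our disposal, in the spirit of Eliasson's treatment of the smooth Hamiltonian case \cite{Eliasson-Normal1990}. By the reduction of Section~2 we may assume $z$ is a fixed point, the rectified regular directions producing the ``constant (regular)'' factor of the statement automatically; so it suffices to linearize a nondegenerate fixed point, i.e. to find a smooth coordinate change in which each $X_i$ becomes $\sum_j f_{ij}\tilde Y_j$ with $f_{ij}$ smooth common first integrals and $\det(f_{ij})(z)\neq 0$, where $\tilde Y_i=\sum_j\tilde c_{ij}x_j\partial/\partial x_j$ is the integrally reparametrized linear part of Section~3 (complexifying if the system is real, exactly as in the analytic proof).

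The first and decisive step is to construct a smooth intrinsic torus action: a smooth $\bbT^{\rho}$-action, $\rho$ the toric degree, that preserves the entire system (every $X_i$ and every first integral) and whose linear part is the model action generated by $\sqrt{-1}\tilde Y_1,\ldots,\sqrt{-1}\tilde Y_p$. Since the model generators have periodic linear flows, the natural approach is to produce genuine periodic flows by averaging: one takes the closures of suitable one-parameter subgroups of the flow of the system, uses nondegeneracy to see that these closures are tori, and integrates over them to manufacture the commuting periodic flows; for the purely hyperbolic directions, where averaging is vacuous, one instead exploits the contraction/expansion dynamics.

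Once the smooth torus action is in hand, the rest parallels the analytic proof. One linearizes it by Bochner's linearization theorem for compact group actions with a fixed point, obtaining smooth coordinates in which $\bbT^{\rho}$ acts exactly as its linear model; since the intrinsic torus action commutes with the system and preserves the first integrals, the $\tilde Y_i$ are tangent to the common level sets, so $X_i\wedge\tilde Y_1\wedge\cdots\wedge\tilde Y_p=0$ for every $i$. Then one invokes a smooth version of the division Lemma~\ref{lemma:division}: its proof is a purely arithmetic argument and transfers verbatim once Hadamard's division lemma replaces analytic division, yielding $X_i=\sum_j f_{ij}\tilde Y_j$ with smooth first-integral coefficients whose matrix is invertible at $z$ because it is a reparametrization of the linear part. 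This gives the desired smooth geometric equivalence with the linear model --- or, for a non-fixed singular point, with its direct product with the constant regular system.

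The main obstacle is the construction of the smooth torus action. In the analytic setting this is precisely where the hard results of \cite{Zung-Poincare2002,Zung-Birkhoff2005} are used, and there is never a \emph{formal} obstruction --- the formal normalization always produces a formal torus action of the right dimension --- but there is no inexpensive way (no Borel-type trick) to pass from a formal torus action to a genuine smooth one near a fixed point that is at once partially hyperbolic and partially elliptic, since flat perturbations can a priori destroy periodicity and the elliptic and hyperbolic behaviours have to be reconciled. This is why the statement stays a conjecture in general. For systems of type $(n,0)$ (Theorem~\ref{thm:NormalForm}) the difficulty becomes tractable because $q=0$: there are no first integrals to preserve, the generic orbit of the $\bbR^n$-action is open, the elliptic part can be linearized by the averaging above while the hyperbolic part is linearized by a Sternberg-type argument for the commuting family, and these two pieces interact only through the commutation relations, which are enough to control the gluing --- the geometry of such actions is then pursued in \cite{ZungMinh_Rn2012}.
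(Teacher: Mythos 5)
This statement is a \emph{conjecture} in the paper: the author explicitly says he does not have a proof in the general case, and offers only a four-step heuristic outline (a smooth torus action of dimension equal to the \emph{real} toric degree; formal linearization plus Borel's theorem; Mather-type finite determinacy to absorb flat terms into the matrix $(f_{ij})$; and an equivariant Sternberg--Chen theorem for commuting hyperbolic vector fields with first integrals), together with a complete proof only for systems of type $(n,0)$. Your proposal is essentially a restatement of that same outline, and you are right to conclude that it does not amount to a proof. So there is no question of your argument being "wrong where the paper is right" --- but you do locate the essential gap in a different place than the paper does, and I think the paper's placement is the correct one.

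In the smooth real category one can hope to construct an intrinsic compact torus action, but only of dimension equal to the \emph{real} toric degree, i.e.\ generated by those combinations of the $\sqrt{-1}\tilde Y_i$ whose flows are genuinely periodic (the elliptic/elbolic directions); this is the paper's step 1 and is considered accessible by the averaging arguments of \cite{Zung-Symplectic1996,Zung-Poincare2002,Zung-Birkhoff2005}. A $\bbT^{\rho}$-action with $\rho$ the full toric degree, which is what your Bochner step requires, simply does not exist over $\bbR$ when hyperbolic directions are present, so Bochner's theorem can only linearize part of the system. The genuinely missing ingredient is the simultaneous smooth linearization of the remaining commuting \emph{hyperbolic} vector fields in the presence of first integrals: one needs an extension of the Belitskii--Kopanskii equivariant Sternberg--Chen theorem to formally linearizable $\bbR^k$-actions with partial hyperbolicity and prescribed first integrals, plus a Mather-type finite-determinacy argument to produce the smooth first-integral matrix $(f_{ij})$. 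Neither is available off the shelf, and your phrase ``the rest parallels the analytic proof'' glosses over exactly this. Relatedly, Lemma~\ref{lemma:division} does not transfer ``verbatim'': its proof rests on the monomial description of the algebra of polynomial first integrals, whereas smooth first integrals of hyperbolic linear fields include flat functions, so even the smooth division step is nontrivial. Finally, in the $(n,0)$ case your intuition that the absence of first integrals is what saves the day is correct, but the paper's actual proof proceeds by induction using stable/center manifolds (or, in the referee's variant, by Sternberg linearization of a radial combination $E=\sum a_iX_i$), not by the averaging-plus-gluing you sketch.
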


We believe that the above conjecture is true, but don't have a full proof of it in the general case. We will prove it
for the case of systems of type $(n,0)$ in the next section. As a rule, normal forms results for smooth systems require more
elaborate work than for analytical systems, because of the lack of complex analytic tools. We have already seen this for
Hamiltonian systems, where the proof of Eliasson's local linearization theorem \cite{Eliasson-Normal1990}, which is the 
smooth counterpart of Vey's theorem \cite{Vey} (see also \cite{Zung-Birkhoff2005}) is much longer than the proof of 
Vey's theorem.

Let us indicate here why we believe that the above conjecture is true, and some methods which could be used to prove it.

1) By geometric arguments similar to the ones used in \cite{Zung-Symplectic1996,Zung-Poincare2002,Zung-Birkhoff2005}, we can show
the existence of a smooth torus $\bbT^d$-action which preserves the system, where $d$ is the {\bf real toric degree}
of the system (i.e. part of the toric degree whose corresponding action is real). Up to geometric equivalence, 
we can also assume that the vector fields which generate this torus action are part of our system.
The remaining vector fields of the system are hyperbolic and invariant with respect to this smooth torus action.

2) Theorem \ref{thm:linearization-nondegenerate} is also true in the formal case with the same proof, because the results
of \cite{Zung-Poincare2002,Zung-Birkhoff2005} are also true in the formal category. So we can apply a formal linearization
to our smooth system. Together with Borel's theorem, it means that there is a local smooth coordinate system in which our
system is already geometrically linear up to a flat term.

3) After the above Step 2, one can try to use  results and techniques on finite determinacy of mappings 
à la Mather \cite{Mather-Determinacy1969} to find  a matrix whose entries are smooth first integrals, such that 
when multiplying this matrix with our vector fields, we obtain a new geometrically equivalent system whose vectors
are linear + flat terms.

4) One can now try to invoke an equivariant version of Sternberg--Chen theorem \cite{Chen-Vector1963,Sternberg}, 
due to Belitskii and Kopanskii \cite{BK-Equivariant2002}, 
which says that smooth equivariant hyperbolic vector fields which are formally linearizable are also smoothly equivariantly linearizable.
Of course, we will have to do it simultaneously for all commuting hyperbolic vector fields. So we need an extension of 
the result of Belitskii and Kopanskii to the situation of a smooth 
$\bbR^k$-action with some hyperbolicity property which is formally linear. Maybe we would also need a version of 
Belitskii--Kopanskii--Sternberg--Chen for  vector fields which have first integrals. Techniques of 
\cite{Chaperon-geometrie1986,ColinVey-Morse1979, DufourMolino-AA, Eliasson-Normal1990} may also be useful here.

\section{Smooth systems of type $(n,0)$}

In this section, we consider a  smooth integrable system of type $(n,0)$, 
consisting of $n$ commuting vector fields $X_1,\hdots, X_n$
on a $n$-dimensional manifold $M^n$. (There is no function, just vector fields). In this case, a geometric linearization means
a true linearization of the vector fields, because there is no function. We will denote by 
\begin{equation}
 \rho: \bbR^n \times M^n \to M^n
\end{equation}
the (local) action of $\bbR^n$ on $M^n$ generated by these vector fields. Moreover, for each vector $v = (v^i) \in \bbR^n$,
we will denote by
\begin{equation}
 X_v = \sum_{i=1}^n v^i X_i
\end{equation}
and call it the {\bf generator of the action associated to $v$}.

First of all, we have the following classification of nondegenerate real linear systems of type $(n,0),$ or in other words, nondegenerate
linear actions of $\bbR^n$ on $\bbR^n$. Such actions are generated by Cartan subalgebras of the Lie algebra of linear vector fields
on $\bbR^n$. This Lie algebra is naturally isomorphic to  $gl(n,\bbR)$, and so the classification of nondegenerate linear actions of 
$\bbR^n$ on $\bbR^n$ corresponds to a classical classification up to conjugation of Cartan subalgebras of  $gl(n,\bbR)$:

\begin{thm} \label{thm:LinearNF}
Let $\rho^{(1)}: \bbR^n \times \bbR^n \to \bbR^n$ be a nondegenerate linear action of 
$\bbR^n$ on  $\bbR^n$.
Then there exist nonnegative integers  $h, e \geq 0$ such that $2h + e = n$, a linear coordinate system $x_1, \hdots, x_n$
on $\bbR^n$, and a linear basis $(v_1,\hdots,v_n)$ of $\bbR^n$ such that the generators $Y_i = X_{v_i} = \sum_j v_i^j X_j$
of the action $\rho^{(1)}$ with respect to the basis $(v_1,\hdots,v_n)$ can be written as follows:
\begin{equation}
\begin{cases}
Y_i = x_i\frac{\partial }{\partial x_i} \quad \forall \quad i = 1,..., h \\
Y_{h+2j-1} = x_{h+2j-1}\frac{\partial }{\partial x_{h+2j-1}} +  x_{h+2j}\frac{\partial }{\partial x_{h+2j}}  \\
Y_{h+2j} = x_{h+2j-1}\frac{\partial }{\partial x_{h+2j}} -  x_{h+2j}\frac{\partial }{\partial x_{h+2j-1}} \quad \forall \quad j = 1,..., e. 
\end{cases} 
\end{equation}
\end{thm}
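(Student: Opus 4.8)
The plan is to read the normal form off the structure of the abelian matrix algebra generated by the action, thereby recovering along the way the classical conjugacy classification of Cartan subalgebras of $gl(n,\bbR)$. Write $X_1,\ldots,X_n$ for the infinitesimal generators of $\rho^{(1)}$. First I would observe that the independence condition $X_1\wedge\cdots\wedge X_n\ne 0$ (almost everywhere), applied to linear vector fields, forces the matrices $X_1,\ldots,X_n$ to be $\bbR$-linearly independent (a nontrivial linear relation among the matrices would give the same relation among the vector fields at every point), so $\fg:=\mathrm{span}_\bbR(X_1,\ldots,X_n)$ is an $n$-dimensional commutative subalgebra of $gl(n,\bbR)$; and nondegeneracy makes every generator $X_v$ semisimple, hence every element of $\fg$ semisimple (a linear combination of commuting semisimple operators is semisimple). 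Complexifying, $\fg_\bbC\subset gl(n,\bbC)$ is a commutative algebra of simultaneously diagonalizable operators, so in a suitable $\bbC$-basis of $\bbC^n$ it is contained in the $n$-dimensional algebra of diagonal matrices; since $\dim_\bbC\fg_\bbC=n$ it \emph{equals} that algebra. Consequently the joint eigenspace (weight) decomposition $\bbC^n=\bigoplus_{k=1}^n\ell_k$ has exactly $n$ one-dimensional lines $\ell_k$, and the corresponding weights $\lambda_1,\ldots,\lambda_n\in\fg^*_\bbC$ (the diagonal-entry functionals) form a basis of $\fg^*_\bbC$.

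Next I would bring in the real structure. The standard conjugation $\sigma$ on $\bbC^n$ with fixed locus $\bbR^n$ commutes with every real operator, hence maps joint eigenspaces to joint eigenspaces and so permutes the lines $\ell_k$; on weights this permutation is $\lambda\mapsto\bar\lambda$, where $\bar\lambda$ denotes complex conjugation of the restriction $\lambda|_\fg$. Let $h$ be the number of fixed points of this involution — the \emph{real} weights, those with $\lambda(\fg)\subset\bbR$ — and $e$ the number of two-element orbits $\{\mu_j,\bar\mu_j\}$ (so that $h+2e=n$). For a real weight the line $\ell_k$ is $\sigma$-stable and its $\sigma$-fixed locus is a real line $L_k\subset\bbR^n$ on which $\fg$ acts by the real character $\lambda_k$. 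For a conjugate pair the plane $\ell\oplus\ell'$ is $\sigma$-stable with real locus a real plane $P_j\subset\bbR^n$; picking a nonzero $v$ in the $\mu_j$-eigenline and the real basis $\xi=v+\sigma(v)$, $\eta=i(v-\sigma(v))$ of $P_j$, a one-line computation shows that $Y\in\fg$ acts on $(\xi,\eta)$ by the matrix $\left(\begin{smallmatrix}a&-b\\ b&a\end{smallmatrix}\right)$, where $\mu_j(Y)=a+ib$. Thus $\bbR^n=L_1\oplus\cdots\oplus L_h\oplus P_1\oplus\cdots\oplus P_e$, and I would take the linear coordinates $x_1,\ldots,x_h$ dual to a choice of nonzero vector on each $L_k$, and $x_{h+2j-1},x_{h+2j}$ dual to $(\xi,\eta)$ on $P_j$.

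It remains to pin down the distinguished basis of generators. On $\fg$ the $n$ real-linear functionals $\lambda_1,\ldots,\lambda_h$ together with $\mathrm{Re}\,\mu_1,\mathrm{Im}\,\mu_1,\ldots,\mathrm{Re}\,\mu_e,\mathrm{Im}\,\mu_e$ form a real basis of $\fg^*$ (they arise by restricting to $\fg$ the diagonal-entry identification $\fg_\bbC\cong\bbC^n$ and taking real and imaginary parts in the conjugate blocks). Let $Y_1,\ldots,Y_n\in\fg$ be the dual basis, so in particular $\lambda_i(Y_k)=\delta_{ik}$ for $i,k\le h$ while $\mu_j(Y_{h+2j-1})=1$ and $\mu_j(Y_{h+2j})=i$, all other relevant pairings being zero. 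Substituting these values of $(a,b)$ into the block description above, and using that the remaining blocks act by zero, shows that $Y_1,\ldots,Y_n$ are exactly the vector fields listed in the theorem. Finally, since $X_1,\ldots,X_n$ is a basis of $\fg$ one can write $Y_i=\sum_j v_i^j X_j$; the matrix $(v_i^j)$ is invertible because $Y_1,\ldots,Y_n$ is likewise a basis of $\fg$, and the rows $v_i=(v_i^j)$ furnish the required basis $(v_1,\ldots,v_n)$ of the source $\bbR^n$, with $Y_i=X_{v_i}$.

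The only step with genuine content is the very first one: showing that $\fg_\bbC$ fills up the whole diagonal algebra — equivalently, that the weight spaces are all one-dimensional and $n$ in number — which is precisely where both hypotheses enter, nondegeneracy providing a joint eigenbasis at all, and the independence (``completeness'') condition $\dim\fg=n$ forcing equality with the diagonal algebra. Everything afterwards is linear bookkeeping with the real structure; the one point there demanding care is the orientation of the $2\times 2$ blocks, so that they produce the rotation $Y_{h+2j}=x_{h+2j-1}\partial_{x_{h+2j}}-x_{h+2j}\partial_{x_{h+2j-1}}$ and not its transpose — this is settled by the explicit choice $\xi=v+\sigma(v)$, $\eta=i(v-\sigma(v))$.
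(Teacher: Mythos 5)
Your argument is correct and is exactly the paper's intended proof, which the text only sketches in two sentences (commuting semisimple linear vector fields are simultaneously diagonalizable over $\bbC$; real joint eigenlines give hyperbolic components, conjugate pairs give elbolic ones) --- your write-up supplies the details, including the key point that the independence condition forces $\fg_\bbC$ to be the full diagonal algebra so that the $n$ weights are distinct and form a basis of $\fg_\bbC^*$, and the sign bookkeeping for the $2\times 2$ blocks. Note only that your relation $h+2e=n$ is the correct one (each elbolic block is two-dimensional); the ``$2h+e=n$'' in the theorem statement is a typo, as confirmed by the condition $h+2e=m$ in Theorem \ref{thm:NormalForm}.
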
 

The proof of the above theorem is a simple exercise of linear 
algebra: since the linear vector fields $X_i$ commute, they are simultaneously
diagonalizable over $\bbC$. Their joint 1-dimensional real eigenspaces correspond to {\bf hyperbolic} components $Y_i$, 
while joint complex eigenspaces correspond to  components $(Y_{h+2j-1},Y_{h+2j})$, which are called {\bf elbolic}
components. (Elbolic means elliptic+hyperbolic; an elbolic component has two sub-components, one of which is elliptic and
the other one is hyperbolic).

Let $p$ be a singular point of a smooth integrable system $(X_1,\hdots, X_n)$
of type $(n,0)$, i.e. $\dim Span_\bbR (X_1(p),\hdots, X_n(p)) < n.$
We do not require $p$ to be a fixed point, i.e.
$\rank p := \dim Span_\bbR (X_1(p),\hdots, X_n(p))$ may be 0 or positive.
Recall that, if $\rank p = k > 0$, then without loss of generality, we may assume that 
$X_{n-k+1}(p)\wedge \hdots \wedge X_n(p) \neq 0,$ and $p$ will be called a
{\it nondegenerate singular point} if it becomes a nondegenerate fixed point
of a system of type $(n-k,0)$ which is obtained from the original system of type
$(n,0)$ by a local reduction with respect to the free local $\bbR^k$-action generated
by $X_{n-k+1},\hdots, X_n$ (see Definition \ref{defn:Nondegenerate}).
The main result of this section is the following local normal form theorem,
which is the smooth version of Theorem \ref{thm:linearization-nondegenerate2} for systems
of type $(n,0)$:

\begin{thm} \label{thm:NormalForm}
Let $p$ be a nondegenerate singular point of a smooth integrable system $(X_1,\hdots, X_n)$
of type $(n,0)$. Denote by
\begin{equation}
 m = n - \dim Span_\bbR (X_1(p),\hdots, X_n(p))
\end{equation}
the corank of the system at  $p$. Then there exists a smooth local coordinate 
system $(x_1,x_2,..., x_n)$ in a neighborhood of $p$, non-negative integers $h, e \geq 0$
such that $h + 2e =m$, and a basis $(v_1,\hdots, v_n)$ of $\bbR^n$ such that the corresponding
generators $Y_i = X_{v_i} (i = 1, \hdots, n)$ of $\rho$ have the following form:
\begin{equation} \label{eqn:NormalForm}
\begin{cases}
Y_i = x_i\frac{\partial }{\partial x_i} \quad \forall \quad i = 1,\hdots, h \\
Y_{h+2j-1} = x_{h+2j-1}\frac{\partial }{\partial x_{h+2j-1}} +  x_{h+2j}\frac{\partial }{\partial x_{h+2j}}  \\
Y_{h+2j} = x_{h+2j-1}\frac{\partial }{\partial x_{h+2j}} -  
      x_{h+2j}\frac{\partial }{\partial x_{h+2j-1}} \quad \forall \quad j = 1, \hdots, e \\
Y_k = \frac{\partial }{\partial x_k} \quad \forall \quad k = m+1, \hdots, n.
\end{cases}
\end{equation}
The numbers $(h, e)$ do not depend on the choice of local coordinates.
\end{thm}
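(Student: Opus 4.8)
The plan is to reduce the entire statement to the smooth linearization of a \emph{single} vector field, namely the generator whose linear part is the radial (Euler) field $\sum_a x_a\,\partial/\partial x_a$, and then to exploit the fact that once this field has been straightened, every vector field commuting with it is automatically linear.

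First I would dispose of the non-fixed-point case. Assuming, as in the setup of the theorem, that $X_{n-k+1}(p)\wedge\cdots\wedge X_n(p)\neq0$ with $k=n-m$, rectify these $k$ commuting independent fields simultaneously so that $X_{n-k+\ell}=\partial/\partial x_{m+\ell}$ for $\ell=1,\dots,k$ in suitable local coordinates. Commutativity forces the coefficients of $X_1,\dots,X_m$ to be independent of $x_{m+1},\dots,x_n$; after subtracting suitable constant combinations of $X_{n-k+1},\dots,X_n$ from $X_1,\dots,X_m$ (so these vanish at $p$) and passing to the quotient by the local $\bbR^k$-action, the data descends to a smooth system $(\bar X_1,\dots,\bar X_m)$ of type $(m,0)$ on $\bbR^m$ with a nondegenerate fixed point, by Definition~\ref{defn:Nondegenerate}. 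The substance of the proof is this fixed-point case; I return to the reconstruction at the end. So assume henceforth $m=n$ and $X_1(p)=\cdots=X_n(p)=0$.

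By nondegeneracy the linear parts $A_1,\dots,A_n\in gl(n,\bbR)$ are independent, commute, and are semisimple, hence span an $n$-dimensional abelian subalgebra all of whose elements are semisimple; simultaneous diagonalization over $\bbC$ identifies this span with a Cartan subalgebra $\mathfrak c$ of $gl(n,\bbR)$. Using Theorem~\ref{thm:LinearNF} I would first perform a linear change of coordinates and of basis so that the generators $Y_i=X_{v_i}$ have linear parts $Y_i^{(1)}$ equal to the normal forms in (\ref{eqn:NormalForm}) (with $h+2e=n$). Since $\mathfrak c$ contains the centre of $gl(n,\bbR)$, the identity matrix lies in $\mathfrak c$; as a vector field it is $E^{(1)}=\sum_a x_a\,\partial/\partial x_a=\sum_{i\le h}Y_i^{(1)}+\sum_{j\le e}Y_{h+2j-1}^{(1)}$, so putting $v_0=\sum_{i\le h}v_i+\sum_{j\le e}v_{h+2j-1}$, the generator $E:=X_{v_0}$ is smooth, vanishes at $p$, and has linear part $E^{(1)}$. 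An expanding node has no resonances ($1=\sum_b n_b$ with $\sum_b n_b\ge2$ is impossible), so by Sternberg's smooth linearization theorem \cite{Sternberg,Chen-Vector1963} some local diffeomorphism $\Phi$, $\Phi(p)=0$, satisfies $\Phi_*E=\sum_a x_a\,\partial/\partial x_a$; replacing $\Phi$ by $\Psi:=(d\Phi_0)^{-1}\circ\Phi$ — any linear automorphism commutes with the radial field, so $E$ stays straightened — we may also arrange $d\Psi_0=\mathrm{id}$. In the $\Psi$-coordinates each $X_{v_i}$ commutes with $\sum_a x_a\,\partial/\partial x_a$ (brackets are preserved by $\Psi_*$ and $E$ commutes with each $X_{v_i}$); writing $X_{v_i}=\sum_b c_b(x)\,\partial/\partial x_b$, the identity $[\sum_a x_a\,\partial/\partial x_a,\,X_{v_i}]=0$ says $\sum_a x_a\,\partial c_b/\partial x_a=c_b$, i.e. each $c_b$ obeys Euler's homogeneity identity of degree one, and integrating $\frac{d}{dt}\big(c_b(tx)/t\big)=0$ shows $c_b$ is the restriction of a linear function. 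Hence $X_{v_i}$ equals its own linear part, which, since $d\Psi_0=\mathrm{id}$, is still $Y_i^{(1)}$; and the integers $h,e$, being read off from the conjugacy class of $\mathfrak c$, do not depend on the coordinates.

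It remains to undo the reduction. After the fixed-point step the generators $X_{v_i}$, $i\le m$, have, in coordinates $(x_1,\dots,x_m,x_{m+1},\dots,x_n)$ with $X_{n-k+\ell}=\partial/\partial x_{m+\ell}$, the form $Y_i^{(1)}+\sum_\ell\delta_{i\ell}(x')\,\partial/\partial x_{m+\ell}$, where $x'=(x_1,\dots,x_m)$, the functions $\delta_{i\ell}$ are smooth with $\delta_{i\ell}(0)=0$, and commutativity gives the cocycle identity $Y_i^{(1)}(\delta_{j\ell})=Y_j^{(1)}(\delta_{i\ell})$. The substitution $x_{m+\ell}\mapsto x_{m+\ell}-g_\ell(x')$ removes the coupling once $Y_i^{(1)}(g_\ell)=\delta_{i\ell}$ for all $i$, and one such $g_\ell$ is explicit: if $(\mu_i)$ is the unique tuple with $\sum_i\mu_i Y_i^{(1)}=\sum_a x_a\,\partial/\partial x_a$, set $\delta_\ell=\sum_i\mu_i\delta_{i\ell}$ and $g_\ell(x')=\int_0^1\delta_\ell(tx')/t\,\dif t$; this $g_\ell$ is smooth by Hadamard's lemma and solves $\sum_a x_a\,\partial g_\ell/\partial x_a=\delta_\ell$, and then $u_i:=Y_i^{(1)}(g_\ell)-\delta_{i\ell}$ is annihilated by $\sum_a x_a\,\partial/\partial x_a$ by the cocycle identity, hence constant, hence zero since it vanishes at the origin. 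This yields (\ref{eqn:NormalForm}). The only genuinely external ingredient in the argument is Sternberg's theorem for a single expanding node, so I do not anticipate a real obstacle, only the bookkeeping of the reduction and the cohomological step above; and it is worth stressing that this simplicity is peculiar to type $(n,0)$, where the full Cartan subalgebra — and hence the radial field — is present among the linear parts, whereas for general type $(p,q)$ with $p<m$ the radial field need not lie in that span, which is exactly why the corresponding statement is only conjectured there.
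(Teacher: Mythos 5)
Your proof is correct, but it is not the route the paper's own proof takes: it coincides, in its essentials, with the alternative argument that the paper only records in Remark \ref{remark:referee} as having been communicated by the referee. The paper's main proof is an induction on the pair $(h,e)$ of hyperbolic and elbolic components: it first applies the formal version of Theorem \ref{thm:linearization-nondegenerate2} together with Borel's theorem to reach a ``linear plus flat'' normal form, then uses the stable manifold theorem to produce the hypersurfaces $\Sigma_i=\{Y_i=0\}$, the Frobenius theorem to build adapted foliations, and (for elbolic components) Bochner's linearization of the circle action, finally assembling linearizing coordinates from integral curves and leaves. Your argument instead isolates the generator $E=X_{v_0}$ whose linear part is the radial field, notes that this linear part is non-resonant so Sternberg--Chen applies, and observes that commuting with the straightened radial field forces every other generator to satisfy Euler's degree-one identity and hence be linear; this is exactly point (i) of the remark, and as the paper notes it has the added benefit of working for finitely differentiable systems. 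Where you genuinely add something is point (ii): the paper's remark only gestures at reducing $m<n$ to $m=n$ via an invariant transversal to the local orbit, whereas you solve the reduction explicitly by a cohomological equation $Y_i^{(1)}(g_\ell)=\delta_{i\ell}$, using the cocycle identity from commutativity and the radial field to integrate it; that computation is sound (the Hadamard-type integral $\int_0^1\delta_\ell(tx')/t\,\dif t$ is smooth because $\delta_\ell(0)=0$, and $u_i$ is killed by the radial field hence constant hence zero). The trade-off is that the paper's induction, while much longer, exhibits the invariant manifolds $\Sigma_i$ and the geometry of the singularity directly, which is used in the follow-up work on $\bbR^n$-actions, whereas your argument is shorter and leans on a single external black box (Sternberg's theorem for an expanding node); your closing observation that the radial field is available precisely because the type is $(n,0)$ correctly identifies why this shortcut does not settle the general conjecture.
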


\begin{proof} (See Remark \ref{remark:referee} for a different, simple proof proposed by the
referee of this paper). The fact the the numbers $(h,e)$ in the above theorem do not 
depend on the choice of coordinates is clear, because they are invariant of the Cartan subalgebra 
of the corresponding reduced system at $p$. We will call $h$ the number of hyperbolic components, 
and $e$ the number of elbolic components of the
system at $p$. We will prove the above theorem by induction on the couple $(h,e)$, 
and will divide the proof into several steps. \\

\underline{Step 1: The case when $(h,e) = (1,0)$.}

In this step, we assume that the corank of the system at $p$ is 1. 
Without loss of generality, we may assume that $X_2(p) \wedge \hdots \wedge X_n(p) \neq 0.$ 
Since the vector fields $X_1, \hdots, X_n$ commute, applying the classical Frobenius theorem, 
we can find a local coordinate system
$(y_1,\hdots, y_n)$ in which $X_i = \frac{\partial}{\partial y_i}$ for $i = 2,\hdots, n.$ In this coordinate
system, the first vector field $X_1$ will have the form:
\begin{equation}
 X_1 = f_1(y_1) \frac{\partial}{\partial y_1} + \hdots + f_n(y_1) \frac{\partial}{\partial y_n}
\end{equation}
(where the functions $f_1,\hdots, f_n$ depend only on the coordinate $y_1$, due to the fact that $X_1$
commutes with the other vector fields). Moreover, we have $f_1(0) = 0$ and $f'(0) \neq 0$, because $p$
is a nondegenerate singular point, so we can write $f_1(y_1) = g(y_1). y_1$, with $g(0) \neq 0.$ Write
$f_i (y_1) = f_i(0) + g_i(y_1). y_1$ for $i=2,\hdots, n$ also.

Replacing $X_1$ by another generator $Z_1 = X_1 - \sum_{i=2}^n f_i(0) X_i$ of the system, we can write
\begin{equation}
 Z_1 = y_1 \sum_{i=1}^n g_i(y_1) \frac{\partial}{\partial y_1} = y_1 \hat{Z_1},
\end{equation}
with $g_1(0) \neq 0$. Notice that $Z_1$ is a regular vector field. The regular integral curve $\Gamma$ of $\hat{Z}_1$
through $p$ is also an integral curve for $Z_1$, and on $\Gamma$ the vector field $Z_1$ can be linearized, i.e.
there is a coordinate function $x_1$ on $\Gamma$, such that the restriction of $Z_1$ to $\Gamma$ has the form
$Z_1 = a x_1 \frac{\partial}{\partial x_1}$, where $a$ is a non-zero constant.

Define new coordinates $(x_1,\hdots,x_n)$ by the following formulas: 
For each  point $q$ in a small neighborhood of $q$,  $x_2(q),\hdots x_n(q)$ are the unique numbers such that 
$q'  = \phi_{X_2}^{-x_2(q)} \circ \hdots \circ \phi_{X_n}^{-x_n(q)} (q)$ belongs to $\Gamma$, 
where $\phi_X$ denotes the flow of the vector field $X$, and  put $x_1(q) = x_1 (q')$. 
One then verifies easily that 
$(x_1,\hdots,x_n)$, together with $Y_1 = Z_1/a$ and $Y_i = X_i$ for all $i \geq 2$ satisfy Equations \eqref{eqn:NormalForm}. \\

\underline{Step 2: The case when $e=0$ and $h  > 1$ arbitrary.}

We will prove by induction on $h$, so let's assume that the theorem is already proved when there are $h-1$ hyperbolic components
and zero elbolic component. Consider now the case with $h$ hyperbolic components and zero elbolic component.

Invoking the formal version of Theorem \ref{thm:linearization-nondegenerate2}, we can assume, without loss of generality,
that the system is already linearized up to flat terms. In other words, we can assume that:
\begin{equation*} 
\begin{cases}
Y_i = x_i\frac{\partial }{\partial x_i} + flat \quad \forall \quad i = 1, \hdots, h \\
Y_k = \frac{\partial }{\partial x_k} + flat \quad \forall \quad k = h+1, \hdots, n,
\end{cases}
\end{equation*}
where $flat$ means a term which is flat at $p$. Since the vector fields $Y_k$ ($k \geq h+1$)
are regular and commute with each other, by the classical
Frobenius theorem we can rectify our coordinate system a bit more to 
kill the flat terms in the expression of $Y_k, k \geq h+1$, and get:
\begin{equation*} 
\begin{cases}
Y_i = x_i\frac{\partial }{\partial x_i} + flat \quad \forall \quad i = 1, \hdots, h \\
Y_k = \frac{\partial }{\partial x_k}  \quad \forall \quad k = h+1, \hdots, n.
\end{cases}
\end{equation*}

Consider the vector field
\begin{equation}
Z_1 = Y_1 - \sum_{i=2}^h Y_i. 
\end{equation}
This vector field is not hyperbolic at $p$ if $h < n$ (it has $n-h$ eigenvalues equal to 0), but it is hyperbolic for the
reduced $h$-dimensional system (the local reduction is done by forgetting about the coordinates $x_{h+1},\hdots, x_n$,
or in other words, by taking the quotient of the neighborhood of $p$ 
by the flows of the vector fields $Y_{h+1},\hdots, Y_n$). So, according to the classical stable manifold theorem, we
have a smooth $(h-1)$-dimensional stable manifold with respect to $Z_1$ on the reduced $h$-dimensional manifold,
which, when pulled back to a neighborhood of $p$ in $M^n$, becomes a smooth center-stable $(n-1)$-dimensional
manifold of $Z_1$, which we will denote by $\Sigma_1$.

Note that $\Sigma_1$ is invariant with respect to our system, which means that all the vector fields $Y_1,\hdots, Y_n$
are tangent to $\Sigma_1$, which in turn implies that the points of $\Sigma_1$ are singular with respect to our system
(the rank of the system at each point is at most $n-1$). But if we forget about $Y_1$, then $(Y_2,\hdots,Y_n)$ form
an integrable system on $\Sigma_1$ of type $(n-1,0)$ which admits $p$ as a singular point with $(h-1)$ 
hyperbolic components, so this sub-system can be linearized on $\Sigma_1$ according to our induction hypothesis. 
For the moment, we don't need this linearization, just a consequence of it which says that for any point $q \in \Sigma_1$, the
closure of the orbit through $q$ of the sub-system (i.e. of the infinitesimal $\bbR^{n-1}$-action generated by 
$(Y_2,\hdots,Y_n)$) contains $p$. With this, we can show that
\begin{equation}
Y_1 (q) = 0 \ \forall \ q \in \Sigma_1. 
\end{equation}
Indeed, if $Y_1(q) \neq 0$ then we can write $Y_1(q) = \sum_{i\geq 2} a_i Y_i(q)$, where $a_i$ are numbers and at least
one of them is different from 0. By commutativity, for any other point $q'$ on the orbit of the system through $q$, 
we also have  $Y_1(q') = \sum_{i\geq 2} a_i Y_i(q') = \sum_{i =2}^h a_i x_i\frac{\partial }{\partial x_i} + 
\sum_{k=h+1}^n a_k \frac{\partial }{\partial x_k} + \hdots$. But when $q'$ is very close to $q$, this expression
contradicts the expression $Y_1 =  x_1\frac{\partial }{\partial x_i} + flat.$ So we must have  $Y_1(q) = 0.$

It is now easy to see that we can write
\begin{equation}
\Sigma_1 = \{q \in \cU \ | \ Y_1 (q) = 0 \}, 
\end{equation}
where $\cU$ denotes a small neighborhood of $p$. Moreover, by construction, $\Sigma_1$ is tangent to $\{x_1=0\}$ at $p$.
By a smooth change of coordinates, we can assume that $\Sigma_1 = \{x_1 = 0\}$. Do the same thing for every $i=1,\hdots, h$.
We can now assume that for every $i=1,\hdots, h$ we have
\begin{equation}
 \Sigma_i = \{ q \in \cU \ | \ Y_i(q) = 0\} = \{x_i = 0\}.
\end{equation}
Then we can write 
\begin{equation}
 Y_i = x_i \hat{Y}_i,
\end{equation}
where $\hat{Y}_i$ is a regular vector field for each $i=1,\hdots,h$. 

Construct a new coordinate system $(y_1,\hdots,y_n)$ as follows.

For each $i = 1,\hdots, h$:

On the regular integral curve $\Gamma_i$ of the vector field $\hat{Y}_i$ through $p$, let $y_i$ be a coordinate function which
linearizes $Y_i$: the restriction of $Y_i$ to $\Gamma_i$ has the form $Y_i = y_i\frac{\partial }{\partial y_i}$.
The vector fields $\hat{Y}_j, j \neq i$ and $Y_{h+1},\hdots, Y_n$ 
satisfy the integrability condition of Frobenius and generate a regular foliation of codimension 1, 
which we will denote by $\cF_1$. For each point $q$ in a small neighborhood $\cU$ of $p$,
define $y_i(q)  = y_i(q')$, where $q'$ is the intersection of the leaf of  $\cF_i$ through $q$ with $\Gamma$.

For  the other indices:

The vector fields $\hat{Y}_1,\hdots, \hat{Y}_h$ generate a regular $h$-dimensional foliation. Denote by
$\Gamma$ the leaf of that foliation through $p$. The functions $y_{h+1}(q), \hdots, y_n(q)$ are defined by the 
condition:
$$
\phi_{X_{h+1}}^{-y_{h+1}(q)} \circ \hdots \circ \phi_{X_n}^{-y_n(q)} (q) \in \Gamma.
$$
One then verifies easily that the vector fields $Y_1,\hdots, Y_n$ satisfy Equations \eqref{eqn:NormalForm}
with respect to the new coordinate system $(y_1,\hdots,y_n).$ \\

\underline{Step 3: The case when $(h,e) = (0,1)$.}

In this case, using formal linearization, we obtain a 
local smooth coordinate system $(x_1,\hdots, x_n)$ in which we have:
\begin{equation*}
\begin{cases}
Y_{1} = x_{1}\frac{\partial }{\partial x_{1}} +  x_{2}\frac{\partial }{\partial x_{2}} + flat \\
Y_{2} = x_{1}\frac{\partial }{\partial x_{2}} -  x_{2}\frac{\partial }{\partial x_{1}}  + flat \\
Y_k = \frac{\partial }{\partial x_k} + flat \quad \forall \  k = 3, \hdots, n.
\end{cases}
\end{equation*}
Using geometric arguments similar to the ones in 
\cite{Zung-Symplectic1996,Zung-Poincare2002,Zung-Birkhoff2005} for constructing torus actions, we can
assume that $Y_2$ generates an action of $\bbT^1.$ Invoking Bochner's linearization theorem, we can assume
that $Y_2$ is already linear, i.e. the flat term in its expression is actually 0:
\begin{equation*}
\begin{cases}
Y_{1} = x_{1}\frac{\partial }{\partial x_{1}} +  x_{2}\frac{\partial }{\partial x_{2}} + flat \\
Y_{2} = x_{1}\frac{\partial }{\partial x_{2}} -  x_{2}\frac{\partial }{\partial x_{1}}  \\
Y_k = \frac{\partial }{\partial x_k} + flat \quad \forall \  k = 3, \hdots, n.
\end{cases}
\end{equation*}

Using arguments similar to those in Step 2, one can show that the center manifold $\Sigma$ of $Y_1$
is a smooth submanifold of dimension $n-2$, and $Y_1$ vanishes on it, i.e. we can write
$\Sigma = \{q \in \cU \ | \ Y_1(q) = 0\},$  where $\cU$ denotes a small neighborhood of $p.$
$\Sigma$ is also the set of fixed points of the $\bbT^1$-action generated by $Y_2,$ and so we have
$$\Sigma = \{q \in \cU \ | \ Y_1(q) = 0\} = \{q \in \cU \ | \ Y_2(q) = 0\} = \{x_1=x_2 = 0\}.$$

One then prove easily that there is a unique local 2-dimensional surface $\Gamma$ which contains $q$
and which is invariant with respect to $Y_1$ and $Y_2$. On $\Gamma$, there is a coordinate system
$(y_1,y_2)$ with respect to which the restrictions of  $Y_1$ and $Y_2$ to $\Gamma$ are linear. One then
proceed as in Step 1 to construct a new coordinate system $(y_1,\hdots, y_n)$ in which the vector fields
$Y_1,\hdots, Y_n$ satisfy Equations \eqref{eqn:NormalForm}. \\

\underline{Step 4: The general case, with arbitrary $(h,e)$}

It is just a combination of the arguments presented in the previous three steps. In fact, one can treat elbolic components
in almost the same way as hyperbolic components, except that instead of integral curves one has to use integral 2-dimensional
disks, and instead of codimension-1 manifolds on which the corresponding vector fields vanish one has to use
codimension-2 submanifolds for elbolic components.
\end{proof}

\begin{remark}
\label{remark:referee}
Another,  simpler proof of Theorem \ref{thm:NormalForm} along the following lines was communicated to us by the referee: 

i) In the case when $m=n$, the linear part of an appropriate  linear combination $E = \sum a_i X_i$
of the vector fields $X_1,\hdots, X_n$ is a radial vector field, i.e. has the form 
$E^{(1)} = \sum_{i=1}^n x_i \frac{\partial}{\partial x_i}$. By Sternberg's theorem, $E$ is smoothly
linearizable, i.e. we can assume that $E = \sum_{i=1}^n x_i \frac{\partial}{\partial x_i}$ after a smooth
change of the coordinate system. Since the vector fields $X_i$ commute with the radial vector field 
$E = \sum_{i=1}^n x_i \frac{\partial}{\partial x_i}$, they are automatically linear in the new coordinate
system. 

ii) The case with $m < n$ can be reduced to the above case, 
by considering the $m$-dimensional isotropy algebra of the infinitesimal $\mathbb{R}^n$-action 
at the singular point, and showing the existence of a
$m$-dimensional invariant submanifolds of the subaction of this isotropy algebra, 
which is transverse to the local orbit through the singular point 
of the $\mathbb{R}^n$-action.

This new prof also works for finitely differentiable systems.
\end{remark}

\vspace{0.5cm}

{\bf Acknowledgements}. I would like to thank the referee for many critical remarks which helped me improve
the presentation of this paper, and especially for letting me know his proof of Theorem \ref{thm:NormalForm}.

\vspace{0.5cm}

\end{document}